\documentclass[12pt]{amsart}
\usepackage{amsmath,amsthm,amscd,amsfonts,amssymb,graphicx,color,mathabx,enumitem,mathtools,comment,amsrefs,tikz}

    \makeatletter
    \def\l@subsection{\@tocline{2}{0pt}{2.9pc}{5pc}{}}
    \def\l@subsubsection{\@tocline{2}{0pt}{5pc}{7.5pc}{}}
    \makeatother
\usepackage[margin=2.8cm]{geometry}

\usepackage{mathabx}
\usepackage[bookmarksnumbered, colorlinks, plainpages]{hyperref}
    \hypersetup{colorlinks=true, linkcolor=blue, citecolor=magenta, filecolor=magenta, urlcolor=cyan}
\allowdisplaybreaks
    \mathtoolsset{showonlyrefs,showmanualtags} 
\numberwithin{equation}{section}
\newtheorem{theorem}[equation]{Theorem}
\newtheorem{lemma}[equation]{Lemma}
\newtheorem{proposition}[equation]{Proposition}

\theoremstyle{definition}
\newtheorem{definition}[equation]{Definition}

\newtheorem{remark}[equation]{Remark}
\newtheorem{question}[equation]{Question}
\usepackage{scalerel} 
\DeclareMathOperator*{\Expectation}{\scalerel*{\mathbb{E}}{\textstyle\sum}}
\DeclareMathOperator*{\Convolution}{\scalerel*{\Asterisk}{\textstyle\sum}}

\begin{document}
\title[Integer Cantor Sets]{Integer Cantor Sets:\\ Arithmetic Combinatorial Properties}

\author[Burgin]{Alex Burgin} 
    \address{School of Mathematics, Georgia Institute of Technology, Atlanta GA 30332, USA}
    \email{alexander.burgin@gatech.edu}
    \thanks{Research of AB supported in part by the Department of Education Graduate Assistance in Areas of National Need program at the Georgia Institute of Technology (Award \# P200A240169)}

\author[Fragkos]{Anastasios Fragkos}
\address{ School of Mathematics, Georgia Institute of Technology, Atlanta GA 30332, USA}
    \email {anastasiosfragkos@gatech.edu}

\author[Lacey]{Michael T. Lacey} 
    \address{ School of Mathematics, Georgia Institute of Technology, Atlanta GA 30332, USA}
    \email {lacey@math.gatech.edu}
    \thanks{Research of MTL and AB supported in part by grant  from the US National Science Foundation, DMS-2247254.}

 \author[Mena]{Dario Mena}
    \address{Centro de Investigaci\'{o}n en Matem\'{a}tica Pura y Aplicada \\ Escuela de Matem\'{a}tica, Universidad de Costa Rica}
    \email{dario.menaarias@ucr.ac.cr}
    \thanks{Research of DM partially supported by grant C1012 from Universidad de Costa Rica}

\author[Reguera]{Maria Carmen Reguera} 
 \address{Universidad de M\'alaga}
    \email{m.reguera@uma.es}
    \thanks{Research of MCR supported by the Spanish Ministry of Science and Innovation through the projects RYC2020-030121-IAEI/10.13039/501100011033/ and PID2022-136619NB-I00 funded by MCIN/AEI/10.13039/501100011033/FEDER, UE.}

\begin{abstract}
    Cantor sets of integers have a rich set of arithmetic combinatorial properties. We consider classical Cantor sets, with a base and a fixed set of allowed digits. For such sets, we (a)  give examples of such sets that satisfy the intersective property with power savings  
    (b) characterize uniform distribution, (c)  establish polynomial mean ergodic theorems and (d) study metric pair correlation of Cantor sets. 
\end{abstract}

\maketitle

\tableofcontents

\section{Introduction} % (fold)
\label{sec:introduction}

We establish arithmetic combinatorial properties of Cantor sets of integers.  
The most accessible example of such a set of integers would be the integer analog of the classical middle third Cantor set: all positive integers $n$ that, when written in base $3$, do not have $1$ as a digit.   
The properties we address in this paper are  (a) uniform distribution for polynomials evaluated along Cantor sets, 
(b)  metric pair correlation,  
(c) intersective properties,  and 
(d) mean polynomial ergodic theorems.  

Let $\mathcal{C}_{b,D}$ denote a \emph{classical integer Cantor set} defined by a base $b\geq 3$, 
a choice of residues $\mathbb R_b$ forming a complete collection of residues for  $\mathbb{Z}_b$, and a collection of digits 
$ D \subset \mathbb{R}_b$ that satisfies $ 2\leq \lvert D \rvert < b$,
That is, 
\begin{equation}
\mathcal{C}_{b, D}  \coloneqq 
\Bigl\{ 
 \sum_{j=0}^k d_j b ^{j} 
 \colon  k\in \mathbb{N} ,\ d_j \in  D
\Bigr\}
\end{equation}
The middle third Cantor set corresponds to $\mathcal{C}_{3,\{ 0, 2 \}}$, that is, base $b=3$, the choice of residues is $\mathbb{R}_3 = \{0,1,2\}$, 
and allowed digits $D= \{0,2\}$.  
We want to allow some flexibility in the choice of residues.  
So, for instance another example would be $b=5$,  $\mathbb R_5 = \{-2,-1,0,1,2\}$, and $D = \{-1,0,1\}$. A key fact that we use is that the representation of a number in a Cantor set is uniquely defined by its digits.

\begin{remark} We exclude the case of $\lvert D \rvert=1$, as basic properties are simply not true for this case. 
If $ D = \{0\}$, the set $\mathcal{C}$ consists only of $0$. And, if $D$ consists of a single non-zero element, then 
there is a dense set of $\vartheta $ for which 
\begin{equation}
 \limsup_n  \bigl\lvert \Expectation _{\mathcal{C}_n} e(j \vartheta ) \bigr\rvert >\liminf_n  \bigl\lvert \Expectation _{\mathcal{C}_n} e(j \vartheta )  \bigr\rvert. 
 \end{equation}
 This is easy to check by way of 
 Baire category methods, or direct construction.   We omit the details. 
 \end{remark}

A more detailed summary of the results of this paper follows, with additional details in each section below.    
We establish simple sufficient conditions for a Cantor set $\mathcal C$ to be \emph{intersective} with power savings, that is, there is a concrete $\delta >0$ so that for any subset $A\subset \{1, \ldots, N\}$ of cardinality $cN^{1- \delta }$, there is an element $k$ in the Cantor set so that $A\cap (A-k) \neq \emptyset$. 
In fact these sets $\mathcal C$ are \emph{van der Corput} as well, and can be of arbitrarily small dimension while still having power savings.  These are deterministic examples, which complement the recent result of Green \cite{green2025newboundsfurstenbergsarkozytheorem} showing that the shifted primes also satisfy the stronger van der Corput property with power savings.

For a Cantor set of integers to be intersective, it is necessary that $0$ is an allowed digit. We show every such Cantor set is intersective, but not necessarily with power savings.  
We discuss the intersective property for certain polynomials evaluated along Cantor sets. This result is known, yet we establish a new density result.  
These results are presented in \S \ref{sec;intersective}.  

The polynomial intersective result uses Furstenberg's correspondence principle, 
much as it first appeared in \cite{MR0498471}, and depend in part on the uniform distribution results for Cantor sets established by Coquet 
\cites{MR506123,MR578819}. We discuss uniform distribution modulo $1$ in \S\ref{sec:uniformDistributionMod1} and obtain extensions of those results to the modular case in \S\ref{sec:modular}, a theme that was the focus of \cite{ERDOS199899} and \cite{SAAVEDRA-ARAYA_2026}.   
The uniform distribution results imply certain mean ergodic theorems, recorded in \S\ref{s;meanErgodic}.  The question of pointwise convergence arises, and appears to be beyond the reach of current methods of proof. 

The last section \S\ref{sec;PairCorellation} notes that Cantor sets with allowed digits that  have small additive energy enjoy a property stronger than classical uniform distribution, namely \emph{metric pair correlation}. We show how to construct such Cantor sets. 
Extensions of this observation would be interesting to pursue.

\smallskip

 %\section{Historical Results}

Integer Cantor sets, also called ``restricted digit sets", are a natural object of study.  The earliest result 
known to us is from 
1965 in Fine's paper \cite{MR184895}, which studied a modular  equidistribution type result for integers in a Cantor set. 
Extensions of Weyl's polynomial equidistribution result has been extended to Cantor sets by Coquet  \cites{MR506123,MR578819}. We will present an extension of these results below.  This was followed by work of Erd\H{o}s, Maduit and S\'{a}r\"{o}zy \cites{ERDOS199899,MR1692287} who considered the modular joint  distribution of Cantor sets of integers, with their   `sum of digits' function.  They established uniform distribution modulo $n$, with error rates on convergence.  
Their work is extended by 
Konyagin \cite{MR1832701}
and more recently 
Saavedra-Araya in \cite{SAAVEDRA-ARAYA_2026}, who used Markov chain techniques to generalize  such sets.  

The definition of `classical' Cantor sets has many possible extensions, with definitions in terms of systems of linear equations \cites{MR2678855,MR2290789}, 
autonoma, as for instance the Morse-Thue sequence \cite{MR1830572}, Markov Chains \cite{SAAVEDRA-ARAYA_2026},
and more general dynamical variants 
as studied by 
Glasscock, Moreira and Richter 
\cite{MR4734865}.

The role of digit functions and restricted digits along the prime integers was raised by Gelfond \cite{MR220693}, who raised the question of finding the distribution of the sum of digits function, for a given base $b$, mod $m$. This was answered 
in by Maudit and Rivat \cite{MR2680394}.
This result attracted a lot of attention to this and related results.

Particularly relevant is Maynard's seminal work \cite{maynard}, which proved the existence of infinitely many prime numbers in these sets; the work of Leng-Sawhney \cite{leng2024vinogradovstheoremprimesrestricted}, which established a ternary-Golbach-type result, extending Vinagradov's theorem to this setting; and the work of Nath \cite{jlms.12837}, who proved Bombieri–Vinogradov type theorems for primes with a missing digit. 
In a different direction Kra and Shalom \cite{kra2025ergodicaverageslargeintersection}  
have studied multiple recurrence for `rational spectra' IP sets.  Here IP can stand for   `infinite parallelepipeds'.  In our setting, any Cantor set with base $b$ and two allowed digits, one of which is zero, form an IP set, with rational spectra.

\section{Preliminaries}

\subsection{Notation} 
We use standard Vinogradov notation, thus $A\ll B$ means that $\lvert A\rvert \leq C \lvert B\rvert$, 
for an absolute constant $C$. By $A=B+O(\delta)$ we mean that $A-B \ll \delta$.  

For integers $N$, we set $[N] = \{1,2, \ldots, N\}$. 
For a finite set $A$ and $f \colon A\to \mathbb C$, we write 
\begin{equation}
    \Expectation_A f = \frac{1}{\lvert A\rvert} 
    \sum_{n\in A} f(n). 
\end{equation}
And we will frequently write $\Expectation_{[N]}=\Expectation_N$.  
When integrating over a probability space $(X,P)$, we will write $\Expectation_X$. 
Exponentials are denoted by $e(x)=e^{2 \pi i x}$. 

\subsection{Measure Preserving Systems} 
We recall standard notions around measure preserving systems.  
In particular, $(X,\mathcal A,\mu,T)$ 
is a \emph{measure preserving system} if $(X,\mathcal A,\mu)$ is a probability space, and $T \colon X\to X$ preserves $\mu$ measure, in that $\mu(T^{-1} A)=\mu(A)$ for all measurable $A\in \mathcal A$.  
We depend upon the standard decomposition of a the measure preserving system into Kronecker and weakly mixing components. 
Namely, letting $\mathcal K$ be the sigma algebra generated by the eigenfunctions of $T$. 
Then, for $f\in L^2(X)$, the function $\mathbb{E}(f\mid \mathcal K)$ is then in the closure of the eigenfunctions. 
And if $\mathbb{E}(f\mid \mathcal K)=0$, then $f$ is \emph{weakly mixing}, and in particular 
\begin{equation}
\lim_{N\to \infty}    \Expectation_N \lvert \langle f, T^n f\rangle \rvert =0. 
\end{equation}

The following classical lemma is a frequent tool in analyzing $L^2$ type limits in an ergodic theoretic setting.  

\begin{lemma}[van der Corput Inequality] \label{l:vdc}
Let $a_1, a_2, \ldots, a_n $ be  elements of a Hilbert space $H$, of norm at most $1$. For all integers $1\leq H \leq N$, we have 
\begin{equation}
\label{vdc} 
\left\| \Expectation_{1\leq j \leq N} a_j \right\| ^2 
\ll 
\Expectation_{1\leq h\leq H} 
\left\|
\Expectation_{1\leq j \leq N-h} a_{j+h}\overline{a_j} 
\right\|  
+ H^{-1} + \frac H N. 
\end{equation}
\end{lemma}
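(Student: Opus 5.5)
The plan is to follow the classical shifting argument. Write $S = \sum_{j=1}^N a_j$, and extend the sequence by $a_j = 0$ for $j \le 0$ and $j > N$. The basic identity is
\begin{equation}
H S = \sum_{j=1-H}^{N} \sum_{h=1}^{H} a_{j+h},
\end{equation}
since each $a_k$ with $1\le k\le N$ is counted exactly $H$ times. The outer sum has $N+H$ terms, so Cauchy--Schwarz gives
\begin{equation}
H^2 \lVert S\rVert^2 \le (N+H) \sum_{j} \Bigl\lVert \sum_{h=1}^H a_{j+h}\Bigr\rVert^2 .
\end{equation}

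Next I expand the square:
\begin{equation}
\Bigl\lVert \sum_{h=1}^H a_{j+h}\Bigr\rVert^2 = \sum_{h,h'=1}^H \langle a_{j+h}, a_{j+h'}\rangle .
\end{equation}
I then sum over $j$ and group by the difference $r = h - h'$. The diagonal $r=0$ contributes at most $HN$, because $\lVert a_k\rVert \le 1$. For each $r \ne 0$ with $|r| < H$, there are $H-|r|$ pairs $(h,h')$, and each pair produces the sum $\sum_j \langle a_{j+r}, a_j\rangle$ over the admissible range $1\le j\le N-|r|$. The terms with $r$ and $-r$ are complex conjugates, so the total is bounded by
\begin{equation}
HN + 2\sum_{r=1}^{H-1} (H-r)\,\Bigl\lvert \sum_{j=1}^{N-r} \langle a_{j+r}, a_j\rangle \Bigr\rvert .
\end{equation}

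Dividing through by $H^2 N^2$ and using $N+H \le 2N$ gives
\begin{equation}
\lVert \Expectation_N a_j\rVert^2 \ll \frac1H + \Expectation_{1\le r\le H} \Bigl\lvert \frac1N \sum_{j=1}^{N-r}\langle a_{j+r},a_j\rangle\Bigr\rvert .
\end{equation}
It remains to convert the normalization $1/N$ into the average $\Expectation_{1\le j\le N-r}$. The two normalizations differ by a factor $N/(N-r) = 1 + O(H/N)$, and each inner average is at most $1$ in absolute value, so this conversion costs at most $O(H/N)$. Together with the $1/H$ term, this gives the claimed inequality.

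The statement writes $a_{j+h}\overline{a_j}$ and takes a norm. I read this as the inner product $\langle a_{j+h}, a_j\rangle$. Alternatively, if $a_j$ lies in an algebra such as $L^\infty$ functions under the $L^2$ norm, the same computation goes through by taking $\lVert \Expectation_j a_{j+h}\overline{a_j}\rVert \ge \lvert \Expectation_j \langle a_{j+h},a_j\rangle\rvert$, since the inner product is the integral of the product.

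The only real bookkeeping obstacle is handling the boundary terms carefully so that they are absorbed into $H/N$. I expect this step to be routine.
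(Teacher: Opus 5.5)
Your proof is correct. It is the classical shifting argument for van der Corput's inequality. The paper states this lemma as classical and gives no proof, so there is no alternative route to compare against.

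One small correction concerns your last step, the normalization. Passing from $\frac1N\sum_{j=1}^{N-r}$ to the average over $1\le j\le N-r$ costs nothing, because $\frac1N\le\frac1{N-r}$ and you only need an upper bound. Your stated justification, $N/(N-r)=1+O(H/N)$, is false when $H$ is comparable to $N$ (for example $r=N-1$). That regime is harmless, since there $H/N\gg 1$ and the inequality is trivial.

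In fact, for $H<N$ your computation gives
\[
\lVert \Expectation_{N} a_j\rVert^2\le 2H^{-1}+4\,\Expectation_{1\le h\le H}\bigl\lvert\Expectation_{1\le j\le N-h}\langle a_{j+h},a_j\rangle\bigr\rvert ,
\]
so the $H/N$ term in the statement is not needed.

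Reading $a_{j+h}\overline{a_j}$ as the inner product is the right interpretation. In the paper's application the $a_j$ are unimodular scalars.
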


\subsection{Cantor Sets} % (fold)
\label{sub:cantor_sets}

We collect here important, yet elementary, properties of the Cantor sets we work with. 
Below, the elements of a Cantor set $\mathcal C$ are written as $k_0<k_1 < \cdots$, and the \emph{sum of digits function} $s_b(k)$ is done relative to the base $b$ of the Cantor set. Thus, writing $k=\sum_{c=0}^t k_c b^c$, with $k_c \in \{0,...,b-1\}$, we have $s_b(k)= \sum_{c=0}^t k_c $.

% \textbf{color}{blue}{Note:Digit set is required be in the 'natural residues'.} 

\begin{proposition} \label{p;CantorStructure}
    Let $\mathcal C=(k_n)_{n\geq 0}$ be an integer Cantor set in base $b$, with admissible digits $D=\{d_0<...<d_{|D|-1}\}\subset \{0,...,b-1\}$. Then, the following are true: 
    \begin{enumerate}
        \item[(i)] For any $n\in \mathbb{N}$, written in base-$|D|$ as $n=\sum_{c=0}^tn_c|D|^c$ with $n_c\in \{0,...,|D|-1\}$, then $k_n=\sum_{c=0}^td_{n_c}b^c$.  And $s_b(k_n)=\sum_{c=0}^td_{n_c}$.  Note: we do not allow leading zeroes in the base-$|D|$ representation of $n$ unless $0\not \in D$. \\
        \item[(ii)] For any $i\geq 0$, $n\geq 0$, and $0\leq j<|D|^i$, $k_{|D|^in+j}=b^ik_n+k_j$, 
        and $s_b(k_{|D|^in+j})=s_b(k_n)+s_b(k_j)$. 
        \\
        \item[(iii)] Fix $h\geq 1$ and $k, s\in \mathbb{N}_0$. Define 
        \begin{equation}
            \Delta_h^*(k,s):=\{n\geq 0:k_{n+h}-k_n=k,\  s_b(k_{n+h}) - s_b(k_n)=s\}. 
        \end{equation}
        Then, if $\Delta_h^*(k,s)$ is non-empty, it is the disjoint union of arithmetic progressions $\Delta_h^*(k,s)=\bigcup_{i\geq1}A_i$, with the step size of each $A_i$ being a power of $|D|$.
    \end{enumerate}
\end{proposition}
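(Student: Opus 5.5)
Part (i) rests on an order-isomorphism. Let $\phi(n)=\sum_c d_{n_c} b^c$ for $n=\sum_c n_c |D|^c$. Since $d_0<\dots<d_{|D|-1}$ all lie in $\{0,\dots,b-1\}$, $\phi(n)$ is a genuine base-$b$ expansion, so the uniqueness of base-$b$ representations makes $\phi$ injective. Its image is exactly $\mathcal C$. It is also strictly increasing: compare $n<n'$ at the highest index $c$ where the $|D|$-ary digits differ. There $n_c<n'_c$, so $d_{n_c}<d_{n'_c}$, and the lexicographic order of base-$b$ digit strings (all digits lying in $\{0,\dots,b-1\}$) agrees with numerical order. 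When the lengths differ we need care: if $0\in D$ then $d_0=0$, and padding with leading zeros changes nothing. If $0\notin D$, longer strings give strictly larger numbers, since $d_0\ge 1$ forces the top digit to be nonzero. In that case we also count strings of every length, which is what the convention in the statement about leading zeros encodes. Either way $\phi$ is an increasing bijection $\mathbb N_0\to\mathcal C$ under the appropriate enumeration, so $k_n=\phi(n)$. The formula for $s_b(k_n)$ is then immediate, because $\sum d_{n_c}b^c$ is already the base-$b$ expansion.

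Part (ii) follows directly from (i). For $0\le j<|D|^i$, the $|D|$-ary digits of $|D|^i n+j$ are those of $j$ in positions $0,\dots,i-1$, padded with zeros if needed, followed by those of $n$ shifted up by $i$. Applying $\phi$ gives $b^i k_n+k_j$, and the digit sums add because the base-$b$ digit blocks do not overlap. The padded zeros of $j$ map to $d_0$, which is $0$ when $0\in D$. When $0\notin D$ this step needs the right indexing convention, and I would check it against the enumeration chosen in (i).

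For (iii), fix $n\in\Delta_h^*(k,s)$ and pick $i$ with $|D|^i> n+h$. Write $n=|D|^i m+j$ with $j=n$, $m=0$. For every $m'\ge0$, (ii) gives $k_{|D|^i m'+j+h}-k_{|D|^i m'+j}=k_{j+h}-k_j=k$, and the digit-sum difference is preserved in the same way. So the whole progression $n+|D|^i\mathbb N_0$ lies in $\Delta_h^*(k,s)$. To get a disjoint union, the plan is to take the minimal element $n_1$ and its progression $A_1$, with step $|D|^{i_1}$ for minimal admissible $i_1$. Then take the least element of $\Delta_h^*\setminus A_1$ and repeat. The sets produced are disjoint by construction, and every element is eventually captured, because the $r$-th chosen starting point is at least $r-1$. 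Alternatively, I could refine all progressions to a common modulus inside each scale band $|D|^{i-1}\le n+h<|D|^i$. The main subtlety is making the greedy selection yield genuinely disjoint progressions. Removing $A_1$ could leave an element $n'$ whose progression $n'+|D|^{i'}\mathbb N_0$ meets $A_1$. In that case I would instead shrink the step to a larger power of $|D|$, splitting $n'+|D|^{i'}\mathbb N_0$ into residue classes modulo $|D|^{\max}$, and discard the classes already covered. This is possible because two progressions whose steps are powers of $|D|$ are either nested or disjoint after passing to the larger power.
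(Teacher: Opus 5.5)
Your route matches the paper's in all three parts: the order-preserving digit map for (i), concatenation of digit blocks for (ii), and for (iii) the invariance $n+|D|^i\mathbb N_0\subset\Delta_h^*(k,s)$ whenever $n\in\Delta_h^*(k,s)$ and $|D|^i>n+h$, followed by greedy selection with minimal exponents. What is missing is the disjointness step. You assert the $A_\ell$ are disjoint ``by construction'', then worry that a later progression might meet $A_1$ and propose splitting it into residue classes. That scenario cannot occur, and the reason takes one line, which is how the paper finishes. The starting points satisfy $j_1<j_2<\cdots$, so the minimal exponents satisfy $i_{\ell_1}\le i_{\ell_2}$ for $\ell_1<\ell_2$. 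Hence $|D|^{i_{\ell_1}}$ divides the step of $A_{\ell_2}$. A common point $x\in A_{\ell_1}\cap A_{\ell_2}$ would give $j_{\ell_2}\equiv x\equiv j_{\ell_1}\pmod{|D|^{i_{\ell_1}}}$. Since $j_{\ell_2}>j_{\ell_1}$, this forces $j_{\ell_2}\in A_{\ell_1}$, contradicting the choice of $j_{\ell_2}$. No refinement is needed.

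Second, the case $0\notin D$ that you defer cannot be repaired by any indexing convention. So your claim in (i) that $\phi$ is an increasing bijection ``under the appropriate enumeration'' is false there. Indeed, (ii) with $n=j=0$ and $i=1$ reads $k_0=(b+1)k_0$, which is impossible because every element of $\mathcal C$ is then positive. In (i), allowing leading zeros makes $\phi(n)$ depend on the padding, since each padded place contributes $d_0b^c\neq 0$. Forbidding them misses elements such as $d_0+d_0b\in\mathcal C$. So (i), (ii), and the proof of (iii) built on (ii) all require $0\in D$. The paper's proof uses this tacitly when it writes $j$ with exactly $i$ digits and treats the padded zeros as contributing $d_0=0$. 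You should state $0\in D$ as a hypothesis rather than leave it as something to check.
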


\begin{proof}
    The point (i) is clear, because the map $\phi:\mathbb{N}_0\rightarrow \mathcal C$ given by  \begin{align*} n&\mapsto \sum_{c=0}^td_{n_c}b^c\end{align*} 
    is both bijective and monotone (with respect to increasing orderings on $\mathbb{N}_0$ and $\mathcal C$). \\
    
    For the point (ii), write $n=\sum_{c=0}^tn_c|D|^c$ and $j=\sum_{c=0}^{i-1}j_c|D|^c$ with $n_c,j_c\in \{0,...,|D|-1\}$. Then, $k_n=\sum_{c=0}^td_{n_c}b^c$ and $k_j=\sum_{c=0}^id_{j_c}b^c$. Note that since the most significant digit of $j$ is at most in the $(i-1)$-st place, we may write $|D|^in+j=\sum_{c=i}^{t+i}n_{c-i}|D|^c+\sum_{c=0}^{i-1}j_c|D|^c$, so that, by part (i) $k_{|D|^in+j}=\sum_{c=i}^{t+i}d_{n_{c-i}}b^c+\sum_{c=0}^{t-i}d_{j_c}b^c$. By factoring $b^i$ out of the first sum and reindexing, the result follows.

    To prove (iii), we first note that if $j+h<|D|^i$, by applying part (ii), we have that for any $n\in \mathbb{N}_0$, 
    $k_{|D|^in+j+h}=b^ik_n+k_{j+h}$.    
    Since also $k_{|D|^in+j}=b^ik_n+k_j$ another application of part (ii) gives us that for every $n \geq 0$
    \begin{align}\label{eq:differences}
        k_{|D|^in+j+h}-k_{|D|^in+j}&= (b^ik_n+k_{j+h})-(b^ik_n+k_j)=k_{j+h}-k_j, \\
        s_b(k_{|D|^in+j+h})-s_b(k_{|D|^in+j})&= s_b(k_n)+s_b(k_{j+h})-s_b(k_n)-s_b(k_j)=s_b(k_{j+h})-s_b(k_j).
    \end{align}
    In particular, $j\in \Delta_h^*(k,s)$ implies that  $ j+|D|^in\in \Delta_h^*(k, s)$ for all $n\geq 0$.

If $\Delta_h^*(k,s)$ is nonempty, let $j_1$ be its minimal element. If $i_1$ is the smallest integer such that $j_1 +h <|D|^{i_1}$, then, by the previous reasoning, $j_1+n|D|^{i_1}\in \Delta_h^*(k,s)$ for each $n\geq 0$. Set $A_1:=\{j_1+n|D|^{i_1}\}_{n\geq 0}$. Let $j_2$ be the minimal element of $\Delta_h^*(k,s)\setminus A_1$, and similarly, by taking $i_2$ minimal such that $j_2+h<|D|^{i_2}$, we have that $j_2+n|D|^{i_2}\in \Delta_h^*(k,s)$ for each $n\geq 0$. Set $A_2:=\{j_2+n|D|^{i_2}\}_{n\geq 0}$.  Inductively, given $A_m = \{j_m+n|D|^{i_m}\}_{n\geq 0}$, take $j_{m+1}$ to be the minimal element of $\Delta_h^*(k,s)\setminus \bigcup_{\ell=1}^m A_{\ell}$, $i_{m+1}$ minimal such that $j_{m+1}+h<|D|^{i_{m+1}}$ and set $A_{m+1} : = \{j_{m+1}+n|D|^{i_{m+1}}\}_{n\geq 0}$.  With this process, we obtain \begin{align*}
    \Delta_h^*(k,s)=\bigcup_{\ell\geq 1}A_\ell.
\end{align*}

For disjointness: if $x\in A_{\ell_1}\cap A_{\ell_2}$ for some $\ell_1<\ell_2$, then there exist integers $n_1,n_2$ such that $x=j_{\ell_1}+n_1|D|^{i_{\ell_1}}=j_{\ell_2}+n_2|D|^{i_{\ell_2}}$. Then $n_2|D|^{i_{\ell_2}}-n_1|D|^{i_{\ell_1}}=j_{\ell_1}-j_{\ell_2}$. Since $\ell_1<\ell_2$ we have that $i_{\ell_1}\leq i_{\ell_1}$ (we have $j_{\ell_1}<j_{\ell_2}$, so $j_{\ell_1}+h<j_{\ell_2}+h<|D|^{i_{\ell_2}}$), and so $j_{\ell_1}-j_{\ell_2}\equiv 0\ (\text{mod }|D|^{i_{\ell_1}})$, and hence $j_{\ell_2}\in j_{\ell_1}+|D|^{i_{\ell_1}}\mathbb{N}_0=A_{\ell_1}$, a contradiction.
\end{proof}

The following result is a corollary of the Proposition above; it is phrased separately for further use alongside the van der Corput Lemma.

\begin{lemma} \label{l:Diff}
    Let $\mathcal{C} = \mathcal{C}_{b,D}$ with $2\leq \lvert D \rvert < b-1$. 
Let $s$ be the largest integer such that $D$ is contained in an arithmetic progression of step size $s$, with $(b,s)=1$.   
Given $\epsilon >0$, and $H\in \mathbb{N}$, 
for $n_0$ sufficiently large, there is a subset $G \subset \mathcal{C} \cap [0,b^{n_0})$ such that these conditions hold: 
\begin{enumerate}
    \item  $\lvert G \rvert \geq (1- \epsilon) \lvert D\rvert ^{n_0}$. 

    \item  For $k'\in G$, $0<h\leq H$, and $m\geq 1$, write $ b^{n_0}k_m + k' \eqqcolon k_n$. 
    Then, 
    \begin{equation}\label{e:shift}
     \begin{split} &  k_{n+h} = b^{n_0}k_m + k'' \\ & s_b(k_{n+h})=s_b(k_m)+s_b(k'')  
     \end{split}
    \end{equation}
    where $k''=k''(k',h)$. 
\end{enumerate}
\end{lemma}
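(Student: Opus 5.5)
We take $G$ to be the set of $k' = k_j$ with $0\le j<|D|^{n_0}$ such that $j+H<|D|^{n_0}$; that is, we discard only the last $H$ indices below $|D|^{n_0}$. The plan is to verify the two conditions by applying Proposition \ref{p;CantorStructure}(ii) twice, exactly as in the proof of part (iii).

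First, the size bound. By Proposition \ref{p;CantorStructure}(i), the indices $0\le j<|D|^{n_0}$ enumerate precisely $\mathcal C\cap[0,b^{n_0})$. Hence
\begin{equation}
\lvert G\rvert = |D|^{n_0}-H \geq (1-\epsilon)|D|^{n_0}
\end{equation}
as soon as $|D|^{n_0}\ge H/\epsilon$, which is how we choose $n_0$ large.

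Second, the shift property. Let $k'=k_j\in G$ and $m\ge 1$. By Proposition \ref{p;CantorStructure}(ii) with $i=n_0$, we have $k_{|D|^{n_0}m+j}=b^{n_0}k_m+k_j$, so $n=|D|^{n_0}m+j$ by monotonicity of the enumeration. For $0<h\le H$, the bound $j+h<|D|^{n_0}$ lets us apply (ii) again to get
\begin{equation}
k_{n+h}=b^{n_0}k_m+k_{j+h}, \qquad s_b(k_{n+h})=s_b(k_m)+s_b(k_{j+h}).
\end{equation}
Setting $k''=k_{j+h}$ gives \eqref{e:shift}, and $k''$ depends only on $j$ (equivalently on $k'$) and $h$, not on $m$.

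The one point needing care is the leading-zero convention in Proposition \ref{p;CantorStructure}(i) when $0\in D$. The block decomposition $|D|^{n_0}m+j$ pads $j$ with zero digits, and these correspond to the digit $d_0$. If $d_0=0$, this padding is harmless. If $0\notin D$, the proposition's convention already allows leading zero-indices, so (ii) holds as stated. Either way we rely on (ii) as recorded earlier rather than reproving it.

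The hypotheses on $s$ and on $|D|<b-1$ play no role in this argument; presumably they are used later alongside Lemma \ref{l:vdc}. The main obstacle is therefore only bookkeeping: making sure the index $n$ attached to $b^{n_0}k_m+k'$ is indeed $|D|^{n_0}m+j$. This follows from the bijectivity and monotonicity of $n\mapsto k_n$.
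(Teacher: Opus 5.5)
For $0\in D$ your argument is correct, and it is the argument the paper intends. The paper gives no separate proof and presents the lemma as a corollary of Proposition~\ref{p;CantorStructure}. The identity $k_{|D|^{n_0}m+j+h}=b^{n_0}k_m+k_{j+h}$ for $j+h<|D|^{n_0}$ is exactly the computation in the proof of part (iii) with $i=n_0$, after discarding the last $H$ indices. This is also the only case where the lemma is applied, since Theorem~\ref{t:polyUniform} assumes $0\in D$.

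Your paragraph on $0\notin D$, however, is wrong. Parts (i) and (ii) of the Proposition fail in that case, whatever convention on leading zeros is adopted. Taking $n=j=0$ in (ii) gives $k_0=(b^i+1)k_0$, forcing $k_0=0\in\mathcal C$. That is impossible when all digits are non-zero.

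Concretely, when $0\notin D$:
\begin{itemize}
\item The indices $0\le j<|D|^{n_0}$ do not enumerate $\mathcal C\cap[0,b^{n_0})$. That set contains elements of several different lengths, hence more than $|D|^{n_0}$ elements.
\item Your $G$ contains elements $k'$ (for instance $k_0$, once $n_0$ is large) with fewer than $n_0$ base-$b$ digits. For such $k'$ the number $b^{n_0}k_m+k'$ has a zero digit, so it is not in $\mathcal C$ at all, and condition (2) does not even make sense.
\end{itemize}

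To cover this case, take $G$ to be the $|D|^{n_0}$ elements of $\mathcal C$ with exactly $n_0$ digits, minus the $H$ largest of them. Then $\mathcal C\cap[b^{n_0}k_m,b^{n_0}(k_m+1))$ consists precisely of the numbers $b^{n_0}k_m+r$ with $r$ an $n_0$-digit element of $\mathcal C$, in increasing order of $r$. So the $h$-th successor of $b^{n_0}k_m+k'$ is $b^{n_0}k_m+k''$, where $k''$ is the $h$-th successor of $k'$ among the $n_0$-digit elements. The digit-sum identity follows because no carries occur. Alternatively, simply add the hypothesis $0\in D$ to the lemma.
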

The two points of the conclusion are first, that $G$ is almost as large as it can be, and 
second that for $k_n$, which is most large index elements of the Cantor set,  the shifted element $k_{n+h}$ of the Cantor set  is determined by only $k'$ and $h$.

% subsection cantor_sets (end)

% Section 
\section{Intersective and van der Corput Properties } \label{sec;intersective}
We turn to more explicitly arithmetic properties of these Cantor sets. Namely, the properties of being \emph{intersective} or \emph{van der Corput} sets.  

\begin{definition}
    We say that a set of integers $H \subset \mathbb N $ is \emph{intersective} if for 
 \begin{equation} \label{eSmax}
     I (H,N) = \max 
     \{ \lvert A\rvert /N \colon A\subset  [N], \ (A-A) \cap H = \emptyset\}, 
 \end{equation}
 we have $I (H,N) \to 0$ as $N\to \infty$.   
 We say that $H$ has a \emph{power savings (of order $c$)} if $I (H,N)  \ll N ^{-c}$.
 \end{definition}

It is the Theorem of S\'ark\"ozy \cite{MR487031} and Furstenberg \cite{MR0498471} that the square integers are intersective. 
S\'ark\"ozy also showed that the primes, plus or minus $1$, are also intersective. 
Both results have been the subject of a great deal of interest.  The best current result on the square integers is due to Green and Sawheny \cite{green2025newboundsfurstenbergsarkozytheorem}.

We recall the definition of \emph{van der Corput set} of Kamae and Mendes-France
\cite{MR0516154}.  We give a quantitative definition  of this condition here. 

\begin{definition} 
For a set of integers $H \subset \mathbb{N}$  and $N\in \mathbb{N}$, set  
\begin{equation}\label{e;THN}
    T(H,N) = \inf _{T \in \mathcal{T}_{N,H}} a_0
\end{equation}
where $\mathcal{T}_{N,H}$ is the set of all trigonometric polynomials $T(x)$ satisfying 
\begin{equation} \label{e:T}
    T(x) = a_0 + \sum_{ n<N, n\in H} a_n \cos(2\pi n x) , 
\end{equation}
with $a_n\in \mathbb R$, $T(0)=1$ and $T(x)\geq 0$ for all $x$. 
The sequence $H$ is said to be \emph{van der Corput (vdC)} if $T(H,N) \to 0$. 
It is said to have a \emph{power savings of order $c$} in the van der Corput property if $T(H,N) \ll N ^{-c}$.  
\end{definition}

The van der Corput property has many equivalent formulations. We recall some of the properties formulated in \cite{MR0516154}: 
\begin{itemize}
       \item $H$ is vdC if for all $n$, there are integers  $a_1 < a_2 < \cdots < a_n$ 
    with $a_j - a_i \in H$ for all $1\leq i < j \leq n$.

    \item  $H$ is vdC if any sequence $\{x_n\} \subset [0,1]$ is uniformly distributed if and only if the sequences $\{x_{n+h}-x_n \colon n\in \mathbb N\}$ are uniformly distributed for all $h\in H$.

    \item Kamae and Mendes-France \cite{MR0516154} showed that $I(N,H) \ll T(N,H)$, that is the van der Corput property for $H$ implies that $H$ is intersective. 
    But the reverse property does not hold.  
\end{itemize}

We recall the definition of an IP-set, as defined by Furstenberg and Katznelson \cite{MR833409}: these are infinite sets of the form $\{i_{t_1}+ \cdots +i_{t_k}:i_1<i_2<...\}$, i.e. all possible finite sums of distinct elements of some infinite sequence $\{i_t\}$.
A Cantor $\mathcal C$ set contains an IP-set if and only if $0$ is allowed digit for the Cantor set. This condition also characterizes the property of $\mathcal C$ being vdC, since $\mathcal C$ contains an infinite sequence $a_1 < a_2 < \cdots <$ with $a_j-a_i\in \mathcal C$, for $i<j$.   
Thus, $\mathcal C$ is intersective. 
A necessary condition for a sequence of integers $a_n$ to be intersective is that the equation $\{a_n \} \equiv 0 \mod q$ must have infinitely many solutions, for all integers $q$.  Letting $q$ be a sequence like $r!$, for $r>1$, one can see that $\{a_n\}$ must also necessarily contain an IP set.

Green \cite{green2023sarkozystheoremshiftedprimes} has shown that the shifted primes have a power savings in the van der Corput property.  
We see below that a natural condition on the Cantor set implies that the  integer Cantor sets 
have a power savings. 
Recall that we allow the digits in a digit set $D$ to be negative. 

\begin{proposition}\label{Powersavings}  
Let $\mathcal{C}$ be a Cantor set with base $b$, and digit set $D$.  If $S \subseteq \mathbb{Z}_b$ is such that $S-S \subseteq D$, with $\lvert S\rvert >1$, then 
$\mathcal{C}$ is van der Corput with  power savings with $c =  \frac{ \log \lvert S\rvert }{\log b} $. 
    \end{proposition}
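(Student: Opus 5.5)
We will construct an explicit trigonometric polynomial in $\mathcal{T}_{N,\mathcal C}$ with constant term about $|S|^{-n}$, where $b^n\approx N$. The natural candidate is a Fejér-type square built from $S$. Think of $S$ as a set of integer representatives of residues mod $b$; the digit set $D$ is likewise a set of integer representatives, and $S-S\subseteq D$ is read with these representatives. For $n\geq 1$ let
\begin{equation}
A_n=\Bigl\{\sum_{j=0}^{n-1}s_j b^j \colon s_j\in S\Bigr\},\qquad
T_n(x)=\frac{1}{|S|^{2n}}\Bigl\lvert \sum_{a\in A_n} e(ax)\Bigr\rvert^2 .
\end{equation}
Then $T_n\geq 0$ and $T_n(0)=|A_n|^2/|S|^{2n}=1$, once we know that the map $(s_j)\mapsto \sum s_j b^j$ is injective. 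Injectivity holds because distinct elements of $S$ are distinct residues mod $b$, so the lowest differing digit gives a nonzero residue. It follows that $|A_n|=|S|^n$.

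Next, expand the square:
\begin{equation}
T_n(x)=|S|^{-2n}\sum_{a,a'\in A_n} e((a-a')x)=|S|^{-n}+|S|^{-2n}\sum_{a\neq a'}\cos(2\pi(a-a')x).
\end{equation}
Here we pair $(a,a')$ with $(a',a)$ to get real cosines. The key point is that
\begin{equation}
a-a'=\sum_{j<n}(s_j-s_j')b^j,
\end{equation}
and each digit $s_j-s_j'$ lies in $S-S\subseteq D$. So $\pm(a-a')$ is an element of $\mathcal C_{b,D}$; since $0\in S-S\subseteq D$, the needed digit $0$ is allowed. For a pair with $a>a'$, the difference $a-a'$ itself has all digits in $D$, so it is a positive element of $\mathcal C$ of size at most $b^{n}\cdot\max|D|$.

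The remaining terms with $a=a'$ contribute exactly the constant term $a_0=|S|^{-n}$, and no cancellation with zero frequencies occurs elsewhere. The reason is that $a-a'=0$ forces $a=a'$, by the uniqueness of representation noted in the introduction. Hence $T_n\in\mathcal T_{N,\mathcal C}$ with $N\asymp b^{n}$ (up to a constant depending on $D$), and
\begin{equation}
T(\mathcal C,N)\leq |S|^{-n}\ll N^{-\log|S|/\log b}.
\end{equation}
For general $N$, choose $n$ maximal with $C_D b^n<N$; this costs only a constant factor $|S|^{\pm1}$.

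The main obstacle is the digit bookkeeping. Two things must be checked. First, representatives must be chosen so that the integer differences $s-s'$ literally lie in $D$ rather than merely being congruent to elements of $D$ mod $b$. The statement's hypothesis $S\subseteq\mathbb Z_b$ with $S-S\subseteq D$ should be interpreted as $S\subset\mathbb R_b$ with actual integer differences in $D\subset\mathbb R_b$; otherwise carries would destroy membership in $\mathcal C$. Second, every positive frequency $a-a'$ must be a positive element of $\mathcal C$. If $D$ has negative digits, positivity is not automatic. I would handle this by noting that the frequencies come in $\pm$ pairs and $\cos$ is even, so one may use $|a-a'|$. This requires that $-(a-a')$ also has digits in $D$, which holds because $S-S$ is symmetric, so its negatives also lie in $D$. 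This symmetry is exactly why the hypothesis is phrased via $S-S$.
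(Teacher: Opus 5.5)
Your proof is correct, but your test polynomial differs from the paper's. For the van der Corput bound, the paper uses the Riesz product $\prod_{s\in S}\prod_{j=0}^{J}\tfrac12\bigl(1+\cos(2\pi s b^j x)\bigr)$. It handles the intersective bound separately, by showing that the translates $A+k$, $k\in \mathcal{C}_{b,S}\cap[b^n]$, are disjoint. You instead take the normalized Fejér square of $A_n=\mathcal{C}_{b,S}\cap[0,b^n)$, which factors as $\prod_{j<n}\bigl\lvert \lvert S\rvert^{-1}\sum_{s\in S}e(sb^jx)\bigr\rvert^2$. This is the Fourier-side counterpart of the paper's disjoint-translates argument, and it recovers that argument through $I\ll T$.

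What your choice buys is control of the spectrum. Every nonzero frequency is a difference $a-a'$ whose digit string $(s_j-s_j')$ lies in $S-S\subseteq D$ with no carries, so it belongs to $\mathcal{C}$. The constant term is exactly $\lvert S\rvert^{-n}$ by injectivity of the digit map.

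In the paper's product, the digit produced at scale $b^j$ is $\sum_{s\in S}\epsilon_s s$ with $\epsilon_s\in\{-1,0,1\}$. Once $\lvert S\rvert\geq 3$ this can leave $S-S$: for $S=\{0,1,2\}$ the frequency $3b^j$ occurs with positive coefficient, while $S-S=\{-2,\dots,2\}$. The paper only checks that the frequencies come in conjugate pairs, not that they lie in $\mathcal{C}$.

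The two polynomials coincide when $S=\{0,d\}$, which is the case used in the proposition that follows. For general $S$, e.g.\ $S=\{-k,\dots,k\}$ in the small-dimension proposition, your construction is the one that actually yields the exponent $\log\lvert S\rvert/\log b$.

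Your closing worry about positivity is unnecessary. For $a>a'$, the string $(s_j-s_j')$ already represents $a-a'$ with digits in $D$.
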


These examples supplement the primes minus one as concrete examples of sets of integers with a power savings in the intersective property, while also being `thin', in that their density also decreases with a power savings.  

\begin{proof}
 We give the argument for $\mathcal C$ being intersective with power savings.      Construct the Cantor set with digit set $S$, namely  $\mathcal{C}':=\mathcal{C}_{b,S}\cap [b^n]$. 
    Let $A \subset [b^n]$ satisfy $(A-A)\cap \mathcal C = \emptyset$.  
    We claim that the sets $\{A+k:k\in \mathcal{C}'\}$ are pairwise disjoint. Indeed, take $j<k$, both in $\mathcal{C}'$, and suppose $c\in (A+j)\cap (A+ k)$. Then there exist two elements $a_1 \neq a_2$, both in $A$, such that $a_1+j=a_2+k$. This gives that $a_1-a_2=k-j\in \mathcal{C}_{b,S-S}\setminus \{ 0 \} \subseteq \mathcal{C} \setminus \{ 0 \}$, which is a contradiction to the selection of the set $A$. 

    Since the sets $\{ A + k : k \in \mathcal{C}' \}$ are contained in $[2 b^n]$, we have  $|\mathcal{C}'|\cdot |A|\ll b^n$. From $|\mathcal{C}'|\approx |S|^n$, we obtain $|A|\ll (b/|S|)^n$. 
    That is, the claimed power savings for the intersective property holds.

    \smallskip 
  
 To establish the van der Corput property, note that it suffices to provide an upper bound  the quantity $T(\mathcal C, N)$, defined in \eqref{e;THN}
for  $N=b^k$, with integer $k$.    
Consider the product 
\begin{equation}
T(x) \coloneqq \prod _{s\in S}
\prod _{j=0}^J\tfrac12 (1 + \cos (2\pi sb^jx)) 
= \prod _{j=0}^J\prod _{s\in S}\tfrac12 \left(1 +  \tfrac{1}{2}(e(sb^jx) + e(-sb^jx)) \right).  
\end{equation}
This is a non-negative trigonometric polynomial 
of degree at most $N=b^{J+1}$. 
Note that $T(0)=1$, and its 
 constant term is $2 ^{-J\lvert S \rvert } \leq N ^{-c}$, with $c = \frac {\log \lvert S \rvert}{\log b}$.   
It remains to check that it is a cosine polynomial. 
But expanding in the exponentials, it is clear that they arise in conjugate pairs. Thus the proof is complete. 
\end{proof}

The above proposition arises from the more general statement that if $J-J\subseteq K\subset [N]$, then any set in $[N]$ with no $K$-differences has size $\ll N/|J|$. Here $J$ takes the form $\mathcal{C}_{b,S}$. In general, difference sets have a strong vdC property; see Example 2 in \cite{MR0516154}.
Here, we note simple sufficient conditions for $D$ to contain a difference set.

\begin{proposition}
     Suppose that $|D|> \frac{b+1}{2}$, or that $D$ contains a pair of antipodal points $d,-d\in \mathbb{Z}_b$. Then the Cantor set $C(b,D) $ is van der Corput with  power-savings of at least $\frac{\log 2}{\log b}$. 
\end{proposition}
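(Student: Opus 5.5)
The plan is to reduce both cases to Proposition~\ref{Powersavings}. It suffices to produce a set $S\subseteq \mathbb{Z}_b$ with $\lvert S\rvert = 2$ and $S-S\subseteq D$ (differences taken modulo $b$ and then read in the chosen residue system $\mathbb R_b$). Proposition~\ref{Powersavings} then gives the van der Corput property with power savings of order $\frac{\log \lvert S\rvert}{\log b} = \frac{\log 2}{\log b}$. For a two-element set $S=\{x,y\}$ we have $S-S=\{0,\ x-y,\ y-x\}$. So the requirement is that $0\in D$ and that $D$ contains an antipodal pair $\{t,-t\}$ with $t\neq 0$ in $\mathbb{Z}_b$.

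\emph{Antipodal case.} Suppose $d,-d\in D$ with $d\not\equiv 0$, and take $S=\{0,d\}$. Then $S-S=\{0,d,-d\}$, and we need $0\in D$ as well. I would argue this is implicit: as noted above, a Cantor set can be intersective (a fortiori van der Corput) only if $0$ is an allowed digit. So the hypothesis should be read as including $0\in D$, and I would make that explicit in the proof. A degenerate subcase is $d\equiv -d$, i.e.\ $b$ even and $d=b/2$. Then $S=\{0,b/2\}$ still works, since $S-S=\{0,b/2\}\subseteq D$.

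\emph{Large digit set case.} Suppose $\lvert D\rvert>\frac{b+1}{2}$. Pair each nonzero residue $t$ with $-t$, which partitions $\mathbb{Z}_b\setminus\{0\}$ into antipodal classes.
\begin{itemize}
\item For $b$ odd there are $\frac{b-1}{2}$ classes, each of size $2$. Even if $0\in D$, the nonzero part of $D$ has more than $\frac{b-1}{2}$ elements, so by pigeonhole some class lies entirely in $D$.
\item For $b$ even there are $\frac{b-2}{2}$ classes of size $2$ and the singleton $\{b/2\}$. The nonzero part of $D$ has at least $\frac{b}{2}$ elements. Either $b/2\in D$, which is a self-antipodal pair, or $D$ has at least $\frac b2>\frac{b-2}{2}$ elements among the doubleton classes, so one class is contained in $D$.
\end{itemize}
Either way we get an antipodal pair and reduce to the previous case.

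The main obstacle is the role of $0$. The count $\lvert D\rvert>\frac{b+1}{2}$ was chosen so that the pigeonhole argument works even after removing $0$. However, neither hypothesis forces $0\in D$ by itself, so I would state that $0\in D$ is assumed, as is necessary for intersectivity. An alternative in the large case is to take $S=\{x,x+t\}$ for any $x$, since only differences matter. With residues read modulo $b$, the condition is still exactly $\{0,t,-t\}\subseteq D$, so this changes nothing. The remaining steps are direct applications of Proposition~\ref{Powersavings}.
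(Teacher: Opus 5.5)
Your proposal is correct and follows the paper's argument: extract an antipodal pair from $|D|>\frac{b+1}{2}$ by pigeonhole, then apply Proposition~\ref{Powersavings} with $S=\{0,d\}$ to get power savings $\frac{\log 2}{\log b}$. Your extra care is warranted. The paper's one-line proof silently uses $0\in D$, which is needed for $S-S\subseteq D$ and is genuinely necessary: for example, $b=7$, $D=\{1,\dots,6\}$ gives a Cantor set with no multiple of $7$, so it is not even intersective. The paper also does not spell out the pigeonhole count or the self-antipodal case $d=b/2$, both of which you handle.
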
 

\begin{proof}
    Observe that if $|D|> \frac{b+1}{2}$, then $D$ contains a pair of nonzero antipodal points, so it suffices to prove that the claim follows from the second point. To do this, simply choose $S=\{0,d\}\subset \mathbb{Z}_b$ and apply Proposition \ref{Powersavings}.
\end{proof}

Next, we can give examples that relate the dimension of the Cantor set to the power savings.  In particular, there are small dimension Cantor sets that have power savings. 
Indeed, these are the first deterministic, small dimension intersective sets with a power savings.  
(The shifted primes are full dimensional.)

\begin{proposition}

    For each  $x\in (0,1)$ and $ 0<\epsilon<\min\{x,1-x\}$, there exists an integer Cantor set ${\mathcal{C}}$ for which the following two points hold simultaneously: \begin{enumerate}
        \item[(1)] $  x-\epsilon \leq \dim(\mathcal{C})  \leq x+ \epsilon$. 
        \item[(2)] There is a power-savings of at least $1-x+\epsilon$ 
        for $\mathcal{C}$. 
    \end{enumerate}
\end{proposition}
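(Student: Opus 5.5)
The plan is to apply Proposition \ref{Powersavings} with $S$ an interval of residues and the base $b$ very large. Both the dimension and the savings exponent are then logarithms to base $b$ of quantities comparable to $b^{x}$, so both are close to $x$. I read the dimension of $\mathcal{C}_{b,D}$ as $\log\lvert D\rvert/\log b$. I read item (2) as the conclusion that any $A\subset[N]$ with $(A-A)\cap\mathcal C=\emptyset$ satisfies $\lvert A\rvert\ll N^{1-x+\epsilon}$, i.e.\ $I(\mathcal C,N)\ll N^{-(x-\epsilon)}$.

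This reading is forced by Proposition \ref{Powersavings} itself: since $S-S\subseteq D$ gives $\lvert S\rvert\le\lvert D\rvert$, the savings exponent $\log\lvert S\rvert/\log b$ can never exceed the dimension. So a literal savings exponent $1-x+\epsilon$ with dimension $\le x+\epsilon$ is out of reach of this method when $x<1/2$. This interpretive point is the step I would double check against the authors' intent before writing anything else.

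The construction is as follows. Fix $x,\epsilon$. For a large odd base $b$, take the symmetric residues $\mathbb R_b=\{-\tfrac{b-1}2,\dots,\tfrac{b-1}2\}$. Let $m=\lfloor b^{x}\rfloor$, and set
\begin{equation}
S=\{0,1,\dots,m-1\},\qquad D=S-S=\{-(m-1),\dots,m-1\}.
\end{equation}
Then $\lvert D\rvert=2m-1$. Since $x<1$, for $b$ large we have $2\le 2m-1<b$, so $D$ is a legitimate digit set inside $\mathbb R_b$, and $S-S\subseteq D$ holds by definition.

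The dimension satisfies
\begin{equation}
\frac{\log(2m-1)}{\log b}=x+O\!\left(\frac{1}{\log b}\right),
\end{equation}
because $b^{x}-1\le m\le b^{x}$ and $\log 2/\log b\to 0$. Choosing $b$ so large that the error term is below $\epsilon$ gives item (1).

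Proposition \ref{Powersavings} then yields power savings of order
\begin{equation}
c=\frac{\log m}{\log b}\ge x-\frac{C}{\log b}\ge x-\epsilon
\end{equation}
for $b$ large. From the intersective part of its proof, every $A\subset[b^n]$ with no differences in $\mathcal C$ has $\lvert A\rvert\ll (b/m)^n=N^{1-c}\le N^{1-x+\epsilon}$ with $N=b^n$. Passing from $N=b^n$ to general $N$ costs only a constant factor $b$. This gives item (2), and the van der Corput version follows from the same proposition.

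The only delicate points are the bookkeeping ones: $m\ge2$, $2m-1<b$, and uniform choice of $b$ for both inequalities. All of these hold once $b\ge b_0(x,\epsilon)$.
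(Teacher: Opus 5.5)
Your proof is correct and is essentially the paper's argument: take $S$ an interval of residues with $S-S\subseteq D$ in a large base $b$, so that Proposition \ref{Powersavings} gives savings $\log\lvert S\rvert/\log b=\dim\mathcal C+O(1/\log b)$; the paper uses $S=\{-k,\dots,k\}$, $D=\{-2k,\dots,2k\}$ where you use $S=\{0,\dots,m-1\}$, $D=S-S$. Your reading of item (2) as $\lvert A\rvert\ll N^{1-x+\epsilon}$ is what the paper's proof actually delivers, since there too the savings exponent is only about $x$.
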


\begin{proof}
    Fix a base $b\geq 2$ to be determined later, let $k$ be such that $4k+1\leq b-1$. Setting $D=\{-2k,-2k+1,...,0,...,2k-1,2k\}$ then gives that $\dim (\mathcal{C})=\frac{\log(4k+1)}{\log b}$. We note that for $S:=\{-k,...,0,...,k\}$, $S-S\subset D$; by Proposition \ref{Powersavings} we have a power-savings of $\delta:=\frac{\log (2k+1)}{\log b}$ for the restricted Sarkozy-type problem. Note that $\delta = \dim \mathcal{C}+O(1/\log b)$. Choose $b$ sufficiently large so that $\frac{1}{\log b}\ll \epsilon$, and $k$ so that $\frac{\log(4k+1)}{\log b}\in B(x,\epsilon/2)$, say. This completes the proof of the Lemma.
\end{proof}

We show that the intersective property continues to hold for certain polynomials evaluated along a Cantor set.

\begin{theorem} \label{t:polysIntersective}
 Let $\mathcal C = \mathcal C_{b,D}$ be a classical Cantor set of integers with $0\in D$, and $p $ be  a  polynomial 
 mapping the integers to the integers, with  positive leading coefficient 
 \emph{and no constant term.}
Then, $p( \mathcal C) $ is intersective.  
Moreover, for any set of integers $A$ of positive lower density, 
\begin{equation} \label{e;CantorDensity}
    \liminf_{N\to\infty} 
    \Expectation_{\mathcal C\cap [N]} \mathbf 1_A(k)
    \mathbf 1_A(k+p(k)) >0. 
\end{equation}
\end{theorem}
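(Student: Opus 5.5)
I read \eqref{e;CantorDensity} in the form that the Furstenberg correspondence principle naturally produces, namely as an averaged recurrence statement (this is my interpretation; the display as written compares $k$ itself to $A$, which cannot hold in general since a set of positive density may miss the sparse set $\mathcal C$). By the correspondence principle, as in \cite{MR0498471}, a set $A$ of positive upper density yields a measure preserving system $(X,\mathcal A,\mu,T)$ and a set $E$ with $\mu(E)>0$ such that $\bar d(A\cap(A-n))\geq \mu(E\cap T^{-n}E)$. It therefore suffices to show
\begin{equation}
\liminf_{N\to\infty}\Expectation_{k\in\mathcal C\cap[N]}\mu(E\cap T^{-p(k)}E)>0 ,
\end{equation}
since this forces $p(k)\in A-A$ for some $k\in\mathcal C$, which is the intersective claim. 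The plan is to write $f=\mathbf 1_E=f_{\mathrm{rat}}+f_{\mathrm{irr}}+f_{\mathrm{wm}}$. Here $f_{\mathrm{wm}}$ is the weakly mixing part, $f_{\mathrm{irr}}$ is the projection onto eigenfunctions with irrational eigenvalues, and $f_{\mathrm{rat}}$ is the projection onto the rational Kronecker factor.

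\emph{Step 1: the weakly mixing part.} I will show $\Expectation_{k\in\mathcal C\cap[N]}T^{p(k)}f_{\mathrm{wm}}\to0$ in $L^2$ by induction on $\deg p$, using the van der Corput inequality, Lemma \ref{l:vdc}, applied to $a_n=T^{p(k_n)}f_{\mathrm{wm}}$. Lemma \ref{l:Diff} is exactly the tool for this step. For all but an $\epsilon$-proportion of $n$, one has $k_{n+h}=b^{n_0}k_m+k''$ and $k_n=b^{n_0}k_m+k'$. Hence $p(k_{n+h})-p(k_n)$ is a polynomial in $k_m$ of degree $\deg p-1$, with coefficients depending only on $(k',k'',h)$, and $k_m$ again ranges over $\mathcal C$. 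The induction hypothesis, applied to these finitely many polynomials, then kills the correlations. The base case is a linear polynomial in $k_m$, and there weak mixing enters through the spectral measure of $f_{\mathrm{wm}}$ being continuous together with the uniform distribution of $k\vartheta$ along $\mathcal C$. I expect this step to be the main obstacle. The difficulty is making the induction uniform, including the case where the leading coefficient of the derived polynomial vanishes, which must be handled via Proposition \ref{p;CantorStructure}(iii).

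\emph{Step 2: the irrational Kronecker part.} I approximate $f_{\mathrm{irr}}$ by finite combinations of eigenfunctions $g$ with $Tg=e(\vartheta)g$ and $\vartheta$ irrational. Then $\Expectation_{\mathcal C\cap[N]}e(p(k)\vartheta)\to0$ by Coquet's Weyl-type theorem for Cantor sets \cites{MR506123,MR578819}, and I also need the twisted version in which $e(p(k)\vartheta)$ is multiplied by $e(ak/Q)$. That twisted version follows from the same method, or from the results of \S\ref{sec:uniformDistributionMod1}.

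\emph{Step 3: the rational part and positivity.} Given $\epsilon>0$, choose $Q$ so that $\|f_{\mathrm{rat}}-\mathbb E(f\mid \mathcal K_Q)\|<\epsilon$, where $\mathcal K_Q$ is generated by eigenfunctions with eigenvalues in $\frac1Q\mathbb Z$. Next restrict the average to $\mathcal C_Q=\{k\in\mathcal C:\ Q\mid k\}$. This set has positive relative lower density in $\mathcal C$: because $0\in D$ and $b^j$ is eventually periodic modulo $Q$, sums of $Q$ digit blocks $d\,b^{j}$, with the $j$ chosen in a common residue class of the period, are divisible by $Q$. This gives a positive proportion, which can be made quantitative with the modular results of \S\ref{sec:modular}. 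Since $p$ has integer values and no constant term, $Q\mid k$ implies $Q'\mid p(k)$ for a suitable multiple $Q'$ (replace $Q$ by $Q\cdot\mathrm{denominator}$). Hence $T^{p(k)}$ fixes $\mathbb E(f\mid\mathcal K_Q)$. Expressing $\mathbf 1_{Q\mid k}$ through characters $e(ak/Q)$ and using Steps 1 and 2 in twisted form, the averages over $\mathcal C_Q$ of the weakly mixing and irrational contributions tend to $0$. Therefore
\begin{equation}
\liminf_N \Expectation_{\mathcal C\cap[N]}\mu(E\cap T^{-p(k)}E)\geq \delta_Q\bigl(\|\mathbb E(f\mid\mathcal K_Q)\|_2^2-O(\epsilon)\bigr)\geq \delta_Q\bigl(\mu(E)^2-O(\epsilon)\bigr)>0 ,
\end{equation}
where $\delta_Q$ is the relative density of $\mathcal C_Q$, and the terms with $k\notin\mathcal C_Q$ are dropped because they are nonnegative.
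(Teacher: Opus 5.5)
Your proposal is correct. Its treatment of the rational part is the paper's: approximate the rational-spectrum component by a function fixed by some $T^{t}$, restrict to the positive-density set of $k\in\mathcal C$ with the right divisibility, and expand that indicator in characters $e(ak/t)$.

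Where you differ is the non-rational part. The paper does not separate irrational eigenfunctions from the weakly mixing component. It puts both in $\mathcal H_0=\mathcal H_{\mathrm{rat}}^{\perp}$ and uses that the spectral measure $\mu$ of $f_0$ has no rational atoms. It then writes the twisted correlation average as $\int\Expectation_{n\in[N]}e\bigl(\tfrac at k_n+\alpha p(k_n)\bigr)\,d\mu(\alpha)$. This tends to $0$ by dominated convergence, since the integrand does so for every irrational $\alpha$ (Theorem \ref{t:polyUniform}).

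Your Step 2 needs exactly this twisted polynomial Weyl input. Once you have it, the same spectral argument also disposes of $f_{\mathrm{wm}}$: its spectral measure is continuous, so it gives $\mathbb Q$ measure zero. Hence the van der Corput induction of Step 1 is redundant for this theorem. What Step 1 buys is a PET-style argument for the weakly mixing part that uses only the linear equidistribution of Lemma \ref{l;irrational}. That kind of argument extends to averages of several polynomial iterates, where no single spectral measure is available.

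If you keep Step 1, the obstacle you anticipate does not arise:
\begin{itemize}
\item the coefficient of $x^{d-1}$ in $p(b^{n_0}x+k'')-p(b^{n_0}x+k')$ is $d\,c_d\,b^{n_0(d-1)}(k''-k')\neq0$;
\item only finitely many derived polynomials occur for fixed $n_0$ and $H$;
\item the twist $e(ak_n/Q)$ becomes the constant $e(a(k''-k')/Q)$ after one differencing.
\end{itemize}

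Three smaller remarks.

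First, your reading of \eqref{e;CantorDensity} is right. As displayed it fails, since a set of density one can miss $\mathcal C$; what the proof actually gives is the averaged recurrence \eqref{eENA}.

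Second, your denominator correction is genuinely needed, and the paper omits it: it asserts $t\mid k\Rightarrow t\mid p(k)$, which fails for $p(x)=x(x-1)/2$ with $t=k=2$. State it the right way round, though. If $L$ clears the denominators of $p$, then $QL\mid k$ implies $Q\mid p(k)$, so you restrict to $k\equiv 0 \bmod QL$. As written, ``$Q\mid k$ implies $Q'\mid p(k)$'' has the implication backwards.

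Third, your digit-block observation only exhibits elements of $\mathcal C_Q$, not a positive proportion of them. For the relative density you need Lemma \ref{lem:positiveDensity}, or the following direct argument.
\begin{itemize}
\item Along $N=b^n$, group all but boundedly many digits into blocks whose length is the eventual period of $b^j \bmod Q$.
\item Because $0\in D$, this makes $k \bmod Q$ a lazy random walk on $\mathbb Z_Q$.
\item With the leftover digits equal to $0$, this gives $k\equiv0 \bmod Q$ with probability bounded below uniformly in $n$.
\item Any initial index segment $[0,M)$ contains a full block $[0,|D|^L)$ with $|D|^L>M/|D|$, so the lower bound passes to all $N$.
\end{itemize}
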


A Cantor set contains an IP set, hence the recurrence property is an immediate consequence of the polynomial IP result of Bergeleson, Furstenberg and McCutcheon \cite{MR1417769}. 
Their powerful result, and its multiple polynomial recurrence extension by Bergelson and McCutcheon \cite{MR1417769}, do not establish additional density information about the set of times at which recurrence occurs.

The new information is the density result \eqref{e;CantorDensity}: The set of $n$ for which one has recurrence along $p(k_n)$ has positive density.  

For the proof, we rely upon the Furstenberg correspondence, the structure of dynamical systems, and  the following property of Cantor sets detailed in 
Lemma \ref{l:Allq}.  
In particular, that Lemma proves that for Cantor sets 
 $\mathcal C = \mathcal C_{b,D} 
= \{k_1<k_2 < \cdots\}$  with $0\in D$, 
the sequence $ k_n\mod q$ converges to a distribution on $\mathbb{Z} _q$, for all $q$.  But   the limiting measures   are not always uniform. 
That leads us to the following  elementary fact.

\begin{lemma} \label{l:basis} 
Let $q$ be an integer, and $\nu  $ a measure on $\mathbb{Z} _q$ with support $V$.  Then, $\{e ( v \cdot /q) \colon v\in V\}$ is a basis for $\ell^2(\mathbb{Z} _q, \nu )$. 
\end{lemma}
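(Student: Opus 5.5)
The plan is to reduce the statement to the nonsingularity of a square submatrix of the discrete Fourier matrix of $\mathbb{Z}_q$.

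\emph{Reduction.} Since $\nu$ has support $V$, the space $\ell^2(\mathbb{Z}_q,\nu)$ is the space of functions on $V$, so it has dimension $\lvert V\rvert$. Two functions that agree on $V$ are equal as elements of this space. The proposed family $\{e(v\cdot/q)\colon v\in V\}$ also has exactly $\lvert V\rvert$ members. It is therefore a basis if and only if it is linearly independent as functions restricted to $V$. Equivalently, the $\lvert V\rvert\times\lvert V\rvert$ matrix
\begin{equation}
M_V=\bigl(e(vw/q)\bigr)_{v,w\in V}
\end{equation}
must be nonsingular. This matrix is the minor of the Fourier matrix $F_q=(e(xy/q))_{x,y\in\mathbb{Z}_q}$ with rows and columns indexed by $V$. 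The positivity of the weights $\nu(v)$ plays no role, because rescaling the rows does not change the rank.

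\emph{Prime modulus.} When $q$ is prime, I would invoke Chebotarev's theorem on roots of unity: every square minor of $F_q$ is nonzero. The standard proof writes the minor as a generalized Vandermonde determinant in the variable $\zeta=e(1/q)$. One then shows it is nonzero by working in $\mathbb{Z}[\zeta]$ modulo the prime $(1-\zeta)$ and using a derivative/Vandermonde argument, which yields a nonzero leading term modulo $q$. This step is quoted rather than reproved, and it settles the lemma for all prime $q$ and every support $V$.

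\emph{Composite modulus (main obstacle).} For composite $q$ the minors of $F_q$ can vanish. For example, with $q=4$ and $V=\{0,2\}$ the matrix $M_V$ has all entries equal to $1$, so it is singular. Consequently, the literal statement for arbitrary $\nu$ on composite $q$ cannot be obtained by this route. The step I would attempt is to show that $\det M_V\neq 0$ for the specific supports that actually arise in the application, namely the supports of the limit distributions of $k_n \bmod q$ supplied by Lemma \ref{l:Allq}. I would use the digit structure of those supports, which are sets of sums $\sum_j d_j b^j \bmod q$ with $d_j\in D$. The idea is to factor $q$ as a power of a prime times a part coprime to it, and apply the Chinese remainder theorem. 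This splits $M_V$ into tensor-like blocks, and Chebotarev would be applied prime by prime.

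I expect this step to be the main difficulty, and I am not confident it succeeds in general. Since the counterexample above shows the statement can fail for composite $q$, the lemma as stated likely needs $q$ prime, or a modified frequency set in place of $V$.
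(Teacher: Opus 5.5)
Your reduction is the same as the paper's. The paper also reduces the lemma to $\det\bigl(e(vw/q)\bigr)_{v,w\in V}\neq 0$, and then asserts this holds because the matrix is ``a minor of a van der Monde determinant.'' That assertion is the error, and your example exposes it. With $\zeta=e(1/q)$, the full matrix $(\zeta^{xy})_{x,y\in\mathbb{Z}_q}$ is a Vandermonde matrix in the distinct nodes $\zeta^x$. Keeping only the exponents $y\in V$, however, gives a generalized Vandermonde determinant, and these can vanish. For $q=4$ and $V=\{0,2\}$, the characters $1$ and $e(2x/4)=(-1)^x$ coincide on $V$. So the lemma is false as stated. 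Your prime case via Chebotarev's theorem is a correct proof, and it supplies exactly what the paper's one-line argument lacks.

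The step you flag as the main obstacle, proving $\det M_V\neq 0$ for the supports produced by Lemma \ref{l:Allq}, cannot succeed, so you should drop it. Take the middle-third set $\mathcal{C}_{3,\{0,2\}}$ and $q=4$. Then $k_n=2m_n$ with $m_n\in\mathcal{C}_{3,\{0,1\}}$. Since $3^c$ is odd, $m_n \bmod 2$ is the parity of the binary digit sum of $n$ (the Thue--Morse sequence), which is equidistributed. Hence the limiting distribution of $k_n \bmod 4$ is uniform on $V=\{0,2\}$, which is exactly your counterexample. In particular, $\mathbf{1}_{\{0\}}$ restricted to $V$ is not in the span of $e(0\cdot/4)$ and $e(2\cdot/4)$. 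The same happens for $b=4$, $D=\{0,2\}$.

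What actually rescues the application in Theorem \ref{t:polysIntersective} is that the lemma is not needed there. For every integer $k$ one has $\mathbf{1}_{t\mid k}=\frac1t\sum_{a=0}^{t-1}e(ak/t)$, and that proof goes on to treat every $0\le a<t$ anyway. A correct version of the lemma would say only that \emph{some} $\lvert V\rvert$ characters restrict to a basis of $\ell^2(\mathbb{Z}_q,\nu)$. That is trivial, since all $q$ characters span.
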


\begin{proof}
It suffices to check that the vectors 
\begin{equation}
x_w \coloneqq \{e ( v w /q) \colon v\in V\} , \quad w\in V
\end{equation}
are linearly independent. This is so if the matrix 
$\{e ( v w /q) \colon v,w\in V\}$ has non-zero determinant. But this is a minor of a van der Monde determinant, hence its determinant is non zero.

\end{proof}

\begin{proof}
[Proof of Theorem \ref{t:polysIntersective}]
We use the Furstenberg correspondence principle \cite{MR0498471}. 
Thus, for a measure preserving system $(X,T,m)$ and measurable $A\subset X$ with $0<m(A)< \frac{1}{2}$, we should show that 
\begin{equation} \label{eENA}
\liminf_N \Expectation _{N} \int_X \mathbf{1}_A T^{p(k_n)}  \mathbf{1}_A \;dm > 0. 
\end{equation}
In so doing, we will appeal to the fact that all terms are non-negative. In particular, we can always pass to averages over sets of $n$ of positive density. 

Split the Hilbert space $L^2(X)$ into $ \mathcal H _{\textup{rat}} \oplus \mathcal H _{\textup{0}}$, where 
\begin{equation}
 \mathcal H _{\textup{rat}}
 = \textup{clos}\{f\in L^2(X)\colon  
 f = T^k f, \textup{for some $k\in \mathbb{Z} $}   \}, 
 \end{equation}
and $\mathcal H _{\textup{0}}$ is the closed orthogonal space.  
These spaces are invariant under action of $T$.

Let $\mathbf{1}_A = f_r + f_0$ be the corresponding decomposition of $\mathbf{1}_A$. Now $f_r \in \mathcal H _{\textup{rat}}$, and $\lVert f_r\rVert_2^2 \geq  m(A)^2$. 
Moreover, for any integer $n$, we have 
\begin{equation}
    \int_X \mathbf{1}_A  T^n \mathbf{1}_A  \; dm 
    =
    \int_X f_r T^n f_r \;dm 
    +\int_X f_0 T^n f_0 \;dm . 
\end{equation}

Thus, given $0< \epsilon < m(A) ^2 $,  we can choose $h\in \mathcal H _{\textup{rat}}$,  so that 
$\lVert f_r - h \rVert_2 < \epsilon $. 
And also there is an integer $t$ so that  $h = T^t h$. 
In particular, it follows that $\int_X h \cdot T^{jt} h \; dm \geq 
m(A)^2- \epsilon $ for any $j$.

We first address the limit obtained by replacing the indicator by $h$.  
Let $B$ be the set of integers $n$ 
such that  $k_n\equiv 0 \mod t$.  
This set has positive density by Lemma \ref{l:Allq}. 
Since the polynomial has no constant term,  $p(k_n)\equiv 0\mod t$ for all $n\in B$, and   
 $h = T ^{p(k_n)}h$.  
That is, only forming the average over the set $B$, we have 
\begin{align} \label{e:relPrime}
\lim _{N} \Expectation _{\substack{ n\in B\cap  [N] \\  }} \int_X 
f_r \cdot T ^{p(k_n)} f_r \;dm  
\geq   m(A) ^2 - \epsilon . 
\end{align}
To conclude this case, it remains to argue that the function $f_0$ satisfies 
\begin{align} 
\lim _{N} \Expectation _{\substack{ n\in B\cap  [N] \\  }} \int_X 
f_0 \cdot T ^{p(k_n)} f_0 \;dm  
=0. 
\end{align}
And, since $B$ has positive density, it suffices to show that 
\begin{align} 
\lim _{N} \Expectation _{\substack{ n\in [N] \\  }} 
\mathbf{1}_{\{ k_n\equiv 0 \mod t\} }
\int_X 
f_0 \cdot T ^{p(k_n)} f_0 \;dm  
=0. 
\end{align}
This requires a little explanation.  We know that $\{k_n \mod t\}$ converges to  some distribution on $\mathbb{Z}_t$, another consequence of Lemma \ref{l:Allq}. 
And the exponentials with frequencies in the support of that distribution,   form a basis for that limiting distribution.  This is the content of Lemma \ref{l:basis}.  
This implies that $\mathbf{1}_{ k_n\equiv 0 \mod t}$ can be expanded in terms of exponentials mod $t$.  
Therefore, it suffices to show that for each $0\leq a < t$, 
\begin{align}  \label{eExpPoy}
\lim _{N} \Expectation _{\substack{ n\in  [N] \\  }} \int_X e\left(\frac{a}tk_n \right) 
f_0 \cdot T ^{p(k_n)} f_0 \;dm  
=0.
\end{align}
This is a variant of our  polynomial Theorem \ref{t:weaklyMixingPET}.  
We address it via spectral methods.   
We know that 
\begin{align} 
\lim _{N} \Expectation _{\substack{ n\in  [N] \\  }} e\left(\frac{a}t k_n  + \alpha p(k_n) \right) 
\end{align}
exists for all $\alpha$, and is zero for all irrational $\alpha$.  
Letting $ \mu   $ be the Herglotz measure for the sequence $\int_X f_0 T^n f_0\;dm$, the expectation in \eqref{eExpPoy} is then 
\begin{equation}
   \int_{\mathbf{T}}\Expectation _{\substack{ n\in  [N] \\  }}  e\left(\frac{a}t k_n  + \alpha p(k_n) \right)  \;d \mu (\alpha)  
\end{equation}
By choice of  $f_0$, the measure $\mu$ has no rational point masses. Hence, the limit above is zero.  
That completes our proof. 
\end{proof}

\section{Uniform Distribution on \texorpdfstring{$[0,1]^2$}{[0,1]²}}% (fold)
\label{sec:uniformDistributionMod1}

% section ergodic_theorem (end)
\begin{definition} A sequence $(x_n)_{n \geq 1}$ in $[0,1) ^2 $ is said to be \emph{uniformly distributed} if for all $0\leq a < b < 1$ and $0<c<d<1$, 
\begin{equation}
    \lim_{N\to \infty} \Expectation _{[N]}  \mathbf{1}_{ (a,b) \times (c,d)} (x_n) = (b-a)(d-c) 
\end{equation}
That is, the proportion of integers $n$ with $x_n \in (a,b) \times (c,d)$ behaves as expected.  We say that a sequence $(x_n) \subset \mathbb{R} ^2 $  is \textit{uniformly distributed (mod $1$)} if the sequence of fractional parts $(\{ x_n \}) = ( x_{n,1} - \lfloor x_{n,1} \rfloor,  x_{n,1} - \lfloor x_{n,2} \rfloor, 
 )$ is uniformly distributed in $\mathbb{T}  ^2 $.      
\end{definition}

A polynomial $p(x,y)$ is said to be \emph{irrational} if it has an irrational coefficient on  $x$ or $y$, or both $x$ and $y$.  

\begin{definition} A sequence of integers  $\{k_n \colon n \geq 1\}$ in $ \mathbb{Z} ^2  $ is said to be \emph{polynomial uniformly distributed, uniformly}, briefly \emph{PUDU}, if 
for all irrational polynomials, $p (k_n)$ is uniformly distributed on $[0,1] ^2 $, and moreover for all  
$m\in \mathbb Z^2\setminus \{(0,0)\}$, 
\begin{equation}
    \lim_{N-M\to \infty} \Expectation _{[N]\setminus [M]}    e( m \cdot p (k_n))  = 0
\end{equation}
\end{definition}

Let $ \mathcal{C}$ be a classical Cantor set with base $b$ and  digits set $ D$. 
Let $s_b(\cdot)$ be the sum of digits function with base $b$, that is, 
\begin{equation} \label{e:sumofdigits}
s_b\Bigl( \sum_{j=0}^k d_j b^j\Bigr) = 
\sum_{j=0}^k  d_j.  
\end{equation}
The main theorem of this section is this. 

\begin{theorem} \label{t:polyUniform}
Let $\mathcal C = \mathcal C _{b, D} =\{k_1< k_2 < \cdots \}$ be a Cantor set of integers with $0\in D$. 
and $D\subset \{0,1,\ldots, b-1\}$.
Then, $\{(k_n, s_b(k_n) )\colon n\in \mathbb{N} \} $ is PUDU.  
\end{theorem}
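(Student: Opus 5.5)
By Weyl's criterion, the theorem reduces to the following claim. Let $P(x,y)$ be any polynomial with real coefficients whose non-constant part is not in $\mathbb{Q}[x,y]$, so in particular any $m\cdot p$ with $m\neq 0$ and $p$ irrational. Then
\begin{equation}
\lim_{N-M\to\infty}\Expectation_{[N]\setminus[M]} e\bigl(P(k_n,s_b(k_n))\bigr)=0 .
\end{equation}
We prove this by induction on the degree of $P$ in the first variable, using the van der Corput inequality, Lemma \ref{l:vdc}, together with the shift structure of Lemma \ref{l:Diff} and Proposition \ref{p;CantorStructure}(ii).

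\emph{Base case: $P$ linear in $k$.} Write $P(k,s)=\alpha k+Q(s)$. Since $s_b$ is additive over digit blocks and $k$ is linear in the digits, the exponential factorizes as a product over digit positions, with the $s$-variable entering through the sum of digits. For the term $\alpha k+\beta s$ we get
\begin{equation}
\Expectation_{n<|D|^J} e(\alpha k_n+\beta s_b(k_n))=\prod_{j<J}\frac{1}{|D|}\sum_{d\in D}e\bigl((\alpha b^j+\beta)d\bigr).
\end{equation}
Because $0\in D$ and $|D|\ge2$, each factor has modulus at most $1$. It equals $1$ in modulus only if $(\alpha b^j+\beta)d\in\mathbb{Z}$ for every $d\in D$; since some $d\neq 0$ lies in $D$, this means $(\alpha b^j+\beta)d_0\in\mathbb{Z}$ for a fixed nonzero $d_0$. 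If $\alpha$ is irrational, this can hold for only finitely many $j$: two such $j$ would make $\alpha(b^j-b^{j'})d_0$ rational. Infinitely many factors are then bounded away from $1$, by a uniform constant coming from the finiteness of $D$ and a compactness argument. If $\alpha$ is rational but $\beta$ is irrational, every factor is strictly less than $1$ in modulus and, for $j$ in a fixed residue class mod the period of $\alpha b^j$ mod $1$, bounded away from $1$ uniformly. So the averages over complete blocks $[0,|D|^J)$ tend to $0$.

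For general intervals $[M,N)$ we decompose along base-$|D|$ digits into shifted blocks $\{|D|^i n'+j: j<|D|^i\}$. Proposition \ref{p;CantorStructure}(ii) gives $k=b^ik_{n'}+k_j$ and $s=s(k_{n'})+s(k_j)$, so each block average is a unimodular constant times a full block average. The blocks of length $\ge |D|^{i_0}$ carry all but $O(|D|^{i_0}/(N-M))$ of the mass, which gives the uniform ($N-M\to\infty$) statement. Higher powers of $s$ require the same vdC differencing in $s$ as the induction step below, since $s$ is an additive function.

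\emph{Induction step.} Apply Lemma \ref{l:vdc} with a fixed $H$ to reduce to averages of $e\bigl(P(k_{n+h},s_{n+h})-P(k_n,s_n)\bigr)$. By Lemma \ref{l:Diff}, for all $n$ outside a set of relative density $\epsilon$, we can write $k_n=b^{n_0}k_m+k'$ with $k'\in G$, and then $k_{n+h}-k_n=k''-k'$ and $s_{n+h}-s_n=s(k'')-s(k')$ depend only on $(k',h)$. So the $n$-average splits over the residue classes $k'\in G$. On each class the differenced polynomial, as a function of $(k_m,s(k_m))$ after the substitution $k=b^{n_0}k_m+k'$, has lower degree in the leading variable. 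Its irrationality must survive the differencing: for example, the leading coefficient $a$ of $k^d$ becomes $a\,d\,(k''-k')\,b^{n_0(d-1)}$ times $k_m^{d-1}$, which is irrational when $a$ is, since $k''\ne k'$.

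The lemma as stated assumes $|D|<b-1$. The excluded case $|D|=b-1$ would be handled by the same block argument, because only the carries from the top $n_0$ digits matter. The induction hypothesis then gives zero for each class, and letting $\epsilon\to0$ and $H\to\infty$ finishes the step.

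\emph{Main obstacle.} The main obstacle is irrationality bookkeeping. When the only irrational coefficients sit on terms that vanish after differencing, we must track them. Examples are terms purely in $s$, and rational leading coefficients combined with irrational lower-order ones. One then needs a careful descent in which, on each residue class, the resulting polynomial in $(k_m,s(k_m))$ is still non-rational in its non-constant part. If differencing kills everything, we instead pass to subprogressions of $n$ on which the rational part is constant modulo $1$, using that $k_n$ mod $q$ is determined by its low digits. The descent ends in the linear base case.
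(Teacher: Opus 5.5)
Your route is the paper's. You use Weyl's criterion, then the Riesz product $\prod_{j<J}\widehat L(b^j\alpha+\beta)$ with a $\lvert D\rvert$-adic block decomposition for the linear case, then Lemma~\ref{l:vdc} with Lemma~\ref{l:Diff} for the inductive step. However, the cases you set aside as the ``main obstacle'' are exactly where the argument fails, and the remedies you sketch do not work.

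(i) The induction parameter is wrong. A polynomial of degree one in $k$ has the form $kR(s)+Q(s)$, not $\alpha k+Q(s)$. Differencing also does not lower the $k$-degree of mixed monomials: with $s'=s_b(k')$ and $s''=s_b(k'')$, one gets $\Delta_h(\gamma ks)=\gamma b^{n_0}(s''-s')k_m+\gamma(k''-k')s_m+\mathrm{const}$. You need to induct on the total degree of the irrational part, which is the paper's ``irrational degree''.

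(ii) Even with total degree, there is a degenerate case. The top form of the difference is the derivative of the top form of $P$ in the direction $(k''-k',s''-s')$. For $\beta s^\ell$ this derivative vanishes identically whenever $s''=s'$, and for each fixed $h$ that happens for a positive proportion of $n$. For example, with $D=\{0,1\}$ one has $s_b(k_n)=s_2(n)$, and $s_2(n+1)=s_2(n)$ for every $n\equiv 1\pmod 4$. On those classes $\Delta_h(\beta s^2)\equiv 0$, so Lemma~\ref{l:vdc} only gives $\ll \Expectation_{h\le H}\operatorname{dens}\{n\colon s_b(k_{n+h})=s_b(k_n)\}+H^{-1}$. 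You would then have to prove that this average tends to $0$, which is a genuine statement about how carries spread out. Alternatively, you need a separate treatment of $e(Q(s))$, for instance through the i.i.d.\ digit structure of $s_b(k_n)$ on full blocks. Passing to subprogressions cannot help, because the differenced phase is exactly constant on those classes.

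(iii) Your fix for rational higher-degree coefficients rests on a false claim. Unless $q\mid b^j$ for some $j$, $k_n \bmod q$ need not be determined by the low digits. For $b=3$, $D=\{0,1\}$ we have $k_n\equiv s_2(n)\pmod 2$, so $\{n\colon k_n\equiv 0 \bmod 2\}$ contains no progression $\lvert D\rvert^j\mathbb N+r$. The correct device is Fourier expansion. If the rational part $p'$ has denominators dividing $q$, then $e(p'(k,s))$ is $q$-periodic in $(k,s)$, hence a finite combination of characters $e((ak+cs)/q)$. These only shift the linear coefficients by rationals. In the linear case this reduces you to the uniform estimate for $e((\alpha+\tfrac aq)k+(\beta+\tfrac cq)s)$ with one coefficient irrational.

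The paper's own argument has the same two soft spots, so these repairs are needed there too. Its base case asserts that $N(r,q)$ is a union of $\delta$-progressions, which fails for the reason in (iii). Its inductive step asserts that the difference is irrational of degree $\ell-1$ without examining the degenerate directions in (ii).

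Two smaller points:

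(iv) For irrational $\alpha$, knowing that $\lvert\widehat L(b^j\alpha+\beta)\rvert=1$ for at most one $j$ does not give infinitely many factors $\le 1-c$ by compactness alone. You need the dynamics of multiplication by $b$. If $b^j\alpha+\beta$ accumulated only on the finite set $\{\theta\colon\lvert\widehat L(\theta)\rvert=1\}$, the deviations $\epsilon_j$ from that set would eventually satisfy $\epsilon_{j+1}=b\epsilon_j$. They would then vanish, forcing $\alpha\in\mathbb Q$.

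(v) Your worry about $\lvert D\rvert=b-1$ is unnecessary. The conclusion of Lemma~\ref{l:Diff} follows from Proposition~\ref{p;CantorStructure}(ii) for every $D$ with $0\in D$, taking $G=\{k_j\colon j<\lvert D\rvert^{n_0}-H\}$.
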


Coquet \cites{MR506123,MR578819} has established polynomial uniform distribution for general Cantor sets. The novelty here is the uniformity, paired with the sum of digits function.  The uniformity is a property we need for a further work, in which we will necessarily need $0$ to be an allowed digit.

We will appeal to the Weyl criteria, which asserts that uniform distribution of $(x_n) \subset \mathbb{R} ^2 $ in $\mathbb{T} ^2 $ is equivalent to 
\begin{equation} \label{eWeyl}
    \lim_{N\to \infty} \Expectation _{[N]}  e(m \cdot x_n) =0 
    \qquad m\in \mathbb{Z} ^2 \setminus \{(0,0)\}. 
\end{equation}
Here, and throughout,  $e ( \alpha ) = e^{2 \pi i \alpha }$.  
Note that if $p$ is a polynomial with no irrational coefficients on both variables, then  
$p(k_n, s_b(k_n))$ will fail the Weyl test, for an appropriate choice of $m\in \mathbb{Z} ^2 $.  Thus, the Theorem above is a characterization of the uniform distribution property.

The Lemma key to this result, as well as the ergodic theorems in \S\ref{s;meanErgodic}, is below. 

%%%%%%%%%%%%%%%%%%%%%% Lemma 
\begin{lemma}  \label{l;irrational} 
Let $\mathcal{C} = \mathcal{C}_{b,D}$ with $2\leq \lvert D \rvert \leq b$. Let $s$ be the largest integer such that $D$ is contained in a progression of step size $s$. 
Concerning the  limit 
\begin{equation}\label{ee:limit}
\lim _{N\to \infty} \Expectation 
_{n \in  [N]} e(k_n \alpha + s_b(k_n)\beta )
\end{equation}
 these conclusions hold:
\begin{enumerate}
    \item   If $s=1$, the limit exists for all $\alpha $ and $\beta $. If the limit is non-zero,  then $\alpha = \frac{a}{b-1}+\frac {r}{b^t}$, for integers $r$, $a$, and $t$, and $\beta = - \frac{a}{b-1}$.   

    \item If $s>1$,  and $s\mid d$ for all $d\in D$, the limit exists for all $\alpha $ and $\beta $. 
    If the limit is non-zero then    $s\alpha = \frac{a}{b-1}+\frac {r}{b^t}$, for integers $r$, $a$, and $t$, and $s\beta = - \frac{a}{b-1}$.

    \item If $s>1$,  and $s\nmid d$ for some $d\in D$, the limit need not exist for $s\alpha = \frac{a}{b-1}+\frac {r}{b^t}$, for integers $r$, $a$, and $t$, and $s\beta = - \frac{a}{b-1}$.
    In any other case, the limit is $1$ for $(\alpha ,\beta )=(0,0)$ and $0$ otherwise.    
\end{enumerate}
Finally, if $\alpha $ or $\beta $ are irrational,   then   
\begin{equation} \label{e:uniformLimitExponentials} 
\lim _{N-M\to \infty} \Expectation 
_{ n \in [N] \setminus [M]} 
e(k_n \alpha + s_b(k_n)\beta )=0, 
\end{equation}

\end{lemma}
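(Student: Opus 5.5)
The plan is to exploit the product structure of $\mathcal C$ from Proposition \ref{p;CantorStructure}. Set $\phi(d)=e(d(\alpha+\beta))$ and take $n$ with $|D|^J$ digits. By part (i), $k_n=\sum_c d_{n_c}b^c$ and $s_b(k_n)=\sum_c d_{n_c}$. Hence for the full blocks $N=|D|^J$ the average factors as
\begin{equation}
\Expectation_{n<|D|^J} e(k_n\alpha+s_b(k_n)\beta)=\prod_{c=0}^{J-1}\Phi(b^c\alpha,\beta),\qquad \Phi(x,\beta)=\frac1{|D|}\sum_{d\in D}e(d(x+\beta)).
\end{equation}
When $0\in D$ we drop the leading-zero convention. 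Otherwise we prepend a fixed top digit, which changes nothing essential. For general $N$ we write $N$ in base $|D|$ and decompose $[0,N)$ into at most $J(|D|-1)$ blocks of the form $\{|D|^i m+j: 0\le j<|D|^i\}$. By part (ii), each block average equals a unimodular phase $e(b^ik_m\alpha+s_b(k_m)\beta)$ times the partial product $P_i=\prod_{c<i}\Phi(b^c\alpha,\beta)$.

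For existence and the characterization of nonzero limits, the key observation is that $|\Phi(x,\beta)|=1$ if and only if all $e(d(x+\beta))$, $d\in D$, coincide. This happens if and only if $(d-d')(x+\beta)\in\mathbb Z$ for all $d,d'\in D$, that is, if and only if $s'(x+\beta)\in\mathbb Z$, where $s'=\gcd(D-D)$. This is the largest step $s$ of a progression containing $D$. If the limit is to be nonzero, we need $\prod_c|\Phi(b^c\alpha,\beta)|\not\to0$. Since $|\Phi|$ is continuous in $x$, this forces $\|s(b^c\alpha+\beta)\|\to0$ fast enough, and in fact forces $s(b^c\alpha+\beta)\in\mathbb Z$ for all large $c$. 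Let $\gamma=s\alpha$ and $\delta=s\beta$. Taking differences, $b^{c}(b-1)\gamma\in\mathbb Z$ modulo the integer parts, so $\gamma=\frac{a}{b-1}+\frac{r}{b^t}$. Substituting back gives $\delta\equiv-\frac{a}{b-1}$. The step I expect to be the main obstacle is this passage from ``infinite product not tending to zero'' to exact integrality. I would handle it by a standard Erdős-type lemma: if $\|b^c\theta\|$ does not tend to zero, then $|\Phi|\le 1-\eta$ infinitely often, and in fact for a positive proportion of $c$, by looking at consecutive pairs $(c,c+1)$. Once $\|s(b^c\alpha+\beta)\|\to0$, the recursion $x_{c+1}=bx_c-(b-1)\delta$ modulo $1$ on small values forces eventual exact equality.

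Given these parameters, each factor with $c\ge t$ equals the constant $e(d_0(\alpha+\beta))$ when $s\mid d$ for all $d$ (cases (1) and (2), with $s=1$ trivially). So $P_i$ stabilizes up to a known phase. I would then check that the block phases $e(b^ik_m\alpha+s_b(k_m)\beta)$ are eventually constant as well, since $b^i\alpha+\beta$ is integral up to the $s$-structure. This gives existence of the limit in cases (1) and (2). In case (3) the factors are pure phases $e(d_0(\alpha+\beta))$, which rotate at a nontrivial angle. The block phases then depend on the digits of $N$, and an explicit pair of subsequences $N$ exhibits two different limits. This only needs an example.

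For the final uniform statement, with $\alpha$ or $\beta$ irrational, the characterization shows that $|\Phi(b^c\alpha,\beta)|\le1-\eta$ for a positive proportion of $c$, so $|P_i|\le\rho^{i}$ for some $\rho<1$ and large $i$. For a window $[M,N)$ with $N-M=L$, we decompose it as above into blocks aligned to powers of $|D|$, using at most $2(|D|-1)\log_{|D|}L$ blocks. All but $O(|D|^{i_0})$ of the indices lie in blocks of size at least $|D|^{i_0}$. Each such block has average bounded by $|P_i|$ uniformly in $m$, because the shift by part (ii) only contributes a unimodular phase. Hence the window average is at most $O(|D|^{i_0}/L)+\sup_{i\ge i_0}|P_i|$, which tends to $0$ uniformly in $M$.
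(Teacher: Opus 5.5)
Your proof follows the paper's route. It uses the Riesz product $\prod_{c<J}\widehat L(b^c\alpha+\beta)$ along $N=|D|^J$ and the fact that $|\widehat L(\theta)|=1$ exactly when $s\theta\in\mathbb Z$. It then deduces that a non-vanishing product forces $s(b^c\alpha+\beta)\in\mathbb Z$ for all large $c$. Your recursion $x_{c+1}\equiv bx_c-(b-1)\delta$ does the job of the paper's limit-set argument for $A_\alpha$, and you work with $s=\gcd(D-D)$ directly instead of rescaling $D$ to $D/s$. Finally, the splitting $k_{|D|^im+j}=b^ik_m+k_j$ passes to general $N$. Your decomposition of an arbitrary window $[M,N)$ into aligned $|D|$-adic blocks is a cleaner route to the uniform statement than the paper's sketch, which takes $M$ a power of $|D|$ and then extends.

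Two things need repair. First, the claim that $|\Phi(b^c\alpha,\beta)|\le 1-\eta$ for a positive proportion of $c$, hence $|P_i|\le\rho^i$, is false. Take $D=\{0,1\}$, $\beta=0$, $\alpha=\sum_{k\ge1}b^{-k!}$. For $(m-1)!\le c<m!$ one has $\|b^c\alpha\|\le 2b^{c-m!}$. So $|\Phi(b^c\alpha,0)|\le 1-\eta$ only for $c$ within $O_\eta(1)$ below a factorial, a set of density zero, and $|P_i|$ decays far more slowly than any $\rho^i$. You do not need a rate. Since $|P_i|$ is non-increasing, infinitely many factors of modulus at most $1-\eta$ already give $|P_i|\to 0$, and your final bound only uses $\sup_{i\ge i_0}|P_i|=|P_{i_0}|$. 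Simply delete the geometric claim.

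Second, case (3) is only promised. Note that $0\in D$ forces $s\mid d$ for every $d\in D$, so case (3) occurs only when $0\notin D$. In that case $\prod_{c<J}\Phi(b^c\alpha,\beta)$ is the average over the elements with exactly $J$ digits, not over the first $|D|^J$ elements. The failure of convergence comes precisely from how consecutive length classes are weighted, so your remark about prepending a digit is carrying the whole argument.

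A concrete example: take $b=5$, $D=\{1,3\}$ (so $s=2$), $\alpha=\frac12$, $\beta=0$. Here $s\alpha=1$ and $s\beta=0$ are of the special form. All digits are odd, so $e(k/2)=(-1)^\ell$ when $k$ has exactly $\ell$ digits. There are $2^\ell$ such $k$, all smaller than the $(\ell+1)$-digit ones. The average over the first $2+4+\cdots+2^\ell$ elements equals $\frac13$ for even $\ell$ and tends to $-\frac13$ along odd $\ell$.

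Finally, a slip: for $c\ge t$ the factors are $e(d_0(b^c\alpha+\beta))$, not $e(d_0(\alpha+\beta))$. In cases (1) and (2) they equal $1$.
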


 This result was established in \cite{ERDOS199899}, with an additional restriction on $q$.  This restriction was recently removed in \cite{SAAVEDRA-ARAYA_2026}*{Theorem A}. 
Both papers establish rates of convergence to the uniform distribution, which we do not address.

\begin{remark} Consider for example the case of $b = 101$, and $D$ all integers $0<n \leq 101$ which are congruent to $1 \mod 5$. 
Then, we have 
\begin{equation}
\widehat L (\alpha )  = \tfrac 1{20} \sum_{m=0} ^{19} e ( (5m +1) \alpha ) 
=  e (6\alpha ) D_{20} (5 \alpha ) , 
\end{equation}
where $D_k$ is the $k$-th Dirichlet kernel.  It has modulus $1$ at $\alpha = 1/5, 2/5, 3/5, 4/5$.  
A second example is  $b=100$, and $D = \{ 11, 31, 71 \}$. 
\end{remark}

\begin{proof} 

The key function is 
\begin{align} \label{e;widehatL}
\widehat L(\theta ) & = \widehat L_b (\theta ) = \Expectation _{\mathcal{C} \cap [b]} e(j\theta ) = \frac{1}{|D|} \sum_{d \in D} e(d \theta).
\end{align}
To see its relevance, observe that the uniform measure on the points $\{ (k,s_b(k)) \colon k \in \mathcal C \cap [0,b^k) \} $ is given by the $k$-fold convolution 
\begin{equation}
 \Convolution_{j=0}^{k-1}  \Expectation_{d\in D}  \delta _{ (b^jd, d) }. 
 \end{equation}
 The Fourier transform of this measure is given by the Riesz product   
\begin{align}  \label{e;widehat}
\widehat L_{b^k} (\alpha , \beta  ) 
& \coloneqq 
\Expectation
_{\mathcal{C}\cap [b^k]} e(\alpha k + \beta s_b(k)) 
\\ 
&= \prod _{j=0}^{k-1} \widehat L (b^j\alpha + \beta  )  .
\end{align}

The function $\widehat{L}$ is an exponential  polynomial.  It is always the case that $\widehat L(0)=1$.   
Note that if $\lvert \widehat L(\theta ) \rvert=1$  
 at some $\theta \neq 0$, then it must be the case that 
 $e ( d \theta) = e ( d' \theta)$ for all $ d \neq d' \in  D$. Since $e((d-d')\theta) = 1$, we obtain that $(d-d')\theta \in \mathbb{Z}$ for all $d, d' \in D$. 
  Thus $\theta =  \frac rs$ for some integers $r$ and $s$ with $s>1$, and there are only a finite number of points $\theta \neq 0$ for which $\lvert \widehat L(\theta ) \rvert=1$.  We also have $s\mid  (d-d')$ for all $d, d'\in  D$.  
 That is, the set $D$ is contained in an arithmetic progression of step size $s > 1$.  
It follows that for any rational of the form 
$\frac {a}{s}$    we have 
 $ \lvert \widehat L(\frac {a}{s}) \rvert =1$.

We first consider the case of $s=1$. 
 This implies that $|\widehat{L}(\theta)| \neq 1$, for every $0<\theta <1$. 
Specializing the limit over  $N$ to $N$ being a power of $b$, the limit is then  
\begin{equation} \label{lim1}
\lim_k  \prod _{j=0} ^{k-1} \widehat L (b^j \alpha + \beta ) . 
\end{equation}
Fix $\alpha $ and consider the limit set of $\{b^j \alpha \}$, namely 
\begin{equation} \label{e;limitset}
A_ \alpha  \coloneqq \bigcap _{k=1}^\infty \overline {  \{ b^j \alpha \colon j\geq k \} }. 
\end{equation}
  This is a closed set, invariant under multiplication by $b$.  
If $A_\alpha$ is \emph{not} equal to $\{-\beta \}$, set $\rho = \lvert \widehat L (a +\beta) \rvert^{1/2}$ 
for a choice of $a\in A_\alpha$ not equal to $- \beta$.  
We have $0<\rho<1$, and  $\lvert \widehat L (b^j \alpha + \beta ) \rvert < \rho$ infinitely often. 
Hence, the limit in \eqref{lim1} is $0$, for all values of $\beta $.

Suppose that $A_\alpha$ consists of a single point,  say $\alpha_0$. Then $\alpha_0$ must be a solution to the equation $b\alpha_0 \equiv \alpha_0 \mod 1$. 
That is $\alpha_0 = \frac{a}{b-1}$, for an integer $0\leq a < b-1$. 
Then, we see that $b^j \alpha \to \frac{a}{b-1} \mod 1$ as $j\to\infty$. 
This forces $\alpha-\frac{a}{b-1} $  to be a 
$b$-adic rational.  
That is $\alpha = \frac{a}{b-1}+\frac {r}{b^t}$, for integers $r$ and $t$.   
Then, the limit \eqref{lim1} is zero for $\beta\neq -\frac{a}{b-1}$, and need not be zero if  $\beta= -\frac{a}{b-1}$.

To summarize, the limit exists for all choices of $\alpha$ and $\beta$.  If the limit is not $0$, 
then $\alpha = \frac{a}{b-1}+\frac {r}{b^t}$, for integers $a$, $r$, and $t$, and $\beta =-\frac{a}{b-1}$. 
This verifies the existence of the limit, with $N$ restricted to be a power of $b$.   

\smallskip 

In the case $s=1$, it remains to verify the existence of the limit in \eqref{ee:limit}, that is the limit over all $N$ holds. 
This is done as follows. Let $\{c_n\}$ be the integers in the Cantor set, listed in increasing order.  For integers $N$ and $i$, we will show that 
\begin{align} \label{e;limitN}
    \mathbf{E}_{[N]}e(c_n\alpha+s_b(c_n)\beta)
    -\mathbf{E}_{k\in \mathcal{C}\cap [b^i]}e(k\alpha+s_b(k)\beta)
    &\ll |D|^iN^{-1}. 
\end{align}
We have shown that the limit of the second expectation exists. And, as $N\to\infty$, we can select $i =i(N)$ so that the right hand side goes to zero.   

%% BEGIN SKETCH
Fix $i\geq 1$; then for any $0\leq \ell<|D|^i$, and integer $n$,  $c_{n|D|^i+\ell}=b^ic_n+c_\ell$. This provides that the sum ``splits": \begin{align*}
        \Expectation_{[N]} e(c_n\alpha+s_b(c_n)\beta)
        &+ N^{-1}O(|D|^i)
        \\ &= 
         \Expectation_{n\in[N/|D|^i]}
         \Expectation_{\ell\in[|D|^i]} e\bigl(c_{|D|^in+\ell}\alpha+s_b(c_{|D|^in+\ell})\beta\bigr) 
         \\ &= \Expectation_{n\in[N/|D|^i]}
         \Expectation_{\ell\in[|D|^i]} e\bigl((b^ic_n+c_\ell)\alpha+(s_b(c_n)+s_b(c_\ell))\beta\bigr) 
         \\ &=\Expectation_{n\in[N/|D|^i]}e(c_nb^i\alpha+s_b(c_n)\beta)
         \Expectation_{\ell\in[|D|^i]}  e(c_\ell \alpha+s_b(c_\ell)\beta). 
    \end{align*} 
If the interior expectation tends to zero with $i$,   \eqref{e;limitN} holds. 
If the interior expectation does not tend to zero,
then $\alpha = \frac{a}{b-1}+\frac {r}{b^t}$, for integers $r$ and $t$, and $\beta = - \frac{a}{b-1}$. Then, one see that for $i >t$ we have 
\begin{equation*}
    c_nb^i\alpha+s_b(c_n)\beta \equiv 
    (-c_n+s_b(c_n))\beta \equiv 0 \mod 1 . 
\end{equation*}
This follows because $b-1 \mid b^j-1$ for all integers $j$, and so $b-1 \mid c_n-s_b(c_n)$.  
It follows that  the exterior expectation is trivial.    So, again \eqref{e;limitN} holds.  
This completes the analysis in the case of $s=1$.   
\smallskip 

Let $s>1$ be the maximal step size of a progression that contains $D$. 
If $s\mid d$ for all $d\in D$, consider the Cantor $\mathcal C' = \mathcal C_{b,D'}$, where $D'= \{d/s \colon d\in D\}$. 
The digit set $D'$ is not contained in a non-trivial progression. 
Let $\widehat {L'}(\alpha ) = \mathbb{E}_{D'}e(d\alpha)$.  Then, $\widehat L (\alpha)= \widehat{L'}(s\alpha)$, and all of our conclusions follow from the case of $s=1$,  if $(s,d)=1$.  
This completes the analysis of the second conclusion.

\smallskip 
If $s>1$, and $s\nmid d_0$ for some $d_0\in D$, we turn to the Cantor set $\mathcal C ' = \mathcal C'_{b, D'}$, 
where $D'= \{ (d-d_1)/s \colon d\in D\}$, where $d_1$ the smallest   integer in $D$, which is necessarily non-zero.   
Let $\widehat {L'}(\alpha ) = \mathbb{E}_{D'}e(d\alpha)$.  Then, 
\begin{equation}
    \widehat L (\alpha ) = e(d_1\alpha) \widehat{L'}(s \alpha ). 
\end{equation}
It follows that the Fourier transform in \eqref{e;widehat} is 
 \begin{equation}
    \widehat L_{b^k} (\alpha, \beta  ) 
    = \prod _{j=0}^{k-1} e(d_1b^j\alpha +d_1 \beta ) \widehat{L'}(s  b^j\alpha  + s\beta ) 
\end{equation}
If $s\alpha $ is not of the form $ \frac{a}{b-1} 
+ \frac{r}{b^t}$, or $s\alpha$ is of this form, but  $s\beta \neq -\frac{a}{b-1}$, then the product above goes to zero.  (The presence of the extra exponential term does not impact our previous argument.)  
This completes our analysis of the first conclusion. 

But, otherwise, if say   $\alpha  = \frac{r}{sb^k}$ and $\beta =0$, the terms $\widehat{L'}(s  b^j\alpha ) $ are identically $1$ for $j>k$. 
But the exponential term 
$e(d_1b^j\alpha)= e(b^{j-k}\frac{d_1r}{s} )  $ 
is periodic with period given by the multiplicative order of $b \mod s$. 
Thus, the limit could only exist if  $\widehat{L'}(s  b^j\alpha )$ were zero 
for some $j$.  

\smallskip 
The last claim of the Lemma is that the uniform limit \eqref{e:uniformLimitExponentials} holds, when both $\alpha $ and $\beta $ are irrational.  (Note that if $\alpha $ is a $b$-adic rational and $\beta =0$, the uniform limit need not hold.)  
Suppose first that we restrict $M$ to be a power of $\delta = \lvert D \rvert$, 
so $M=\delta ^{m}$, for integer $m$. 
Then, for $N>M$, we have 
\begin{equation}
\Expectation 
_{\mathcal C \cap ([N] \setminus [M])}
e(k \alpha + s_b(k)\beta ) 
= 
e(b^m \alpha + s_b(b^m)\beta ) 
\Expectation 
_{\mathcal C \cap [N-M]}
e(k \alpha + s_b(k)\beta ) 
\end{equation}
and the expectation on the right can be assumed to be very small, as we evaluate the limit as $N-M \to \infty$. We extend this to the full limit using an argument analogous to \eqref{e;limitN}

\end{proof} 

\begin{proof}[Proof of Theorem \ref{t:polyUniform}] 
For integers $(m,n)$, the map  $(m,n) \cdot p(k_n, s_b(k_n))$ is an irrational polynomial mapping $\mathbb{Z} ^2 \to \mathbb{R} $, since $p$ must have an irrational coefficient on at least one coordinate. 
  Thus, it suffices to show that for such polynomials $p$ 
\begin{equation}  \label{e;UDtoshow}
\lim_{N-M\to \infty} \Expectation_{([N]\setminus [M])} 
e(p(k_n, s_b(k_n))) =0. 
\end{equation}
Following Coquet \cite{MR506123}, 
say that $p \colon \mathbb{Z} ^2 \to \mathbb{R} $ has \emph{irrational degree $\ell$} 
if it has an irrational coefficient on a term of degree $\ell$, and all higher degree terms, if they are present, have  rational coefficients.

Our induction hypothesis is $\textup{UD} _{\ell}$:  The limit \eqref{e;UDtoshow} holds for all polynomials of irrational degree $\ell$.
  Note that to establish $\textup{UD} _{\ell}$, it suffices to consider polynomials with no constant term, which we assume below.   

\begin{proof}[Proof of $\textup{UD} _{1}$]
For the base case, we need an additional definition.  
Call $P$ a \emph{$\delta$-progression} if it is of the form $\delta^j \mathbb N+r$, where $0\leq r < \delta^j$. Here, $\delta = \lvert D\rvert$. 
Observe that for integers $m$, we have  $k _{m\delta^j+r} = k_{m\delta^j}+k_r$, in view of Proposition \ref{p;CantorStructure}. 
Here $r$ and hence $k_r$ are fixed, while $k_{m\delta^j}=\delta^j k_m$.  
The irrationality of $\alpha$ or $\beta$, 
with \eqref{e:uniformLimitExponentials}, then implies that we have
\begin{equation} \label{e:deltaProgressions} 
\lim _{N-M\to \infty} \Expectation 
_{P\cap   ([N] \setminus [M])} 
e(k_n \alpha + s_b(k_n)\beta )=0.  
\end{equation}
That is, we have uniform convergence along $\delta$-progressions for degree 1 irrational coefficient polynomials.

\smallskip 

Now consider a general polynomial $p$ of irrational degree $1$. Thus $p(x,y) =\alpha x+\beta y+p'(x,y)$, where without loss of generality, $p'$ has exclusively rational coefficients and terms of degree two or higher. 
 Let $q$ be the least  common multiple  of the  denominators of the coefficients of $p'$.  
We assert that for all $0\leq r, r'< q$, the set 
\begin{equation} \label{e:Nrq}
  N(r,q) \coloneqq  \{ n \colon k_n \equiv r \mod q ,\  s_b(k_n) \equiv r'\mod q \} 
\end{equation}
is either empty or is the union of disjoint $\delta$-progressions.  
This is consequence of Proposition \ref{p;CantorStructure}(iii). 
And, from this, our claim follows. 

\end{proof}

\begin{proof}[Proof of $\textup{UD}_{\ell-1} \implies \textup{UD}_\ell$] 
For $\ell\geq 2$, we assume $\textup{UD}_{\ell-1}$. 
Fix a polynomial $p$ of irrational degree $\ell$.  
By the van der Corput Lemma \ref{l:vdc},  our task reduces to showing this: 
 For all $\epsilon >0$, 
and  $H\in \mathbb{N} $ with $H > 1/\epsilon $, we have 
\begin{equation}
\lim_{N-M\to \infty}  
 \Expectation_ {0\leq h <H}
\Bigl\lvert \Expectation 
_{ \substack{M<n<N \\ n\in P}} 
e( p(k_{n+h}, s_b(k_{n+h}))-p(k_{n}, s_b(k_{n})) ) \Bigr\rvert ^2 \ll   \epsilon . 
\end{equation}
For this, we apply Lemma \ref{l:Diff}, with this choice of $\epsilon $ and $H$.  
Thus, with $b ^{n_0-1}<N<b^{n_0}$, for sufficiently large integers $n_0$, we have  $G \subset \mathcal C _{b ^{n_0}}$ as in that that Lemma.  
For $k_n = k' + b ^{n_0} k_m$, with $k'\in G$, and  $k_m \in \mathcal C$, we have 
$k _{n+h} = k'' + b ^{n_0 } k_m$, where $k'' = k''(k',h) \neq k'$. Here $1\leq h < H$ is fixed.  
Then, $s_b(k_{n+h}) = s_b(k'')+ s_b(k_m)$. 
The consequence of these observations is that 
\begin{equation}
 p(k_{n+h}, s_b(k_{n+h}))-p(k_{n}, s_b(k_{n}))
\end{equation}
is an irrational polynomial of degree $\ell -1$ in $k_n$.  Thus, the induction hypothesis applies, to conclude that this term is at most $\ll \epsilon $. 

\end{proof}

\end{proof}

\section{Uniform Distribution on \texorpdfstring{$\mathbb Z_a \times \mathbb Z_{a'}$}{Za x Za'}} \label{sec:modular}
While uniform distribution results have been investigated in \cite{SAAVEDRA-ARAYA_2026} we choose to revisit those with a simpler approach through the lens of Weyl's equidistribution theorem combined with lemma \ref{l;irrational}. 

First, for the sum of digits function.  

\begin{proposition}  \label{p;sumDigits}
Let $\mathcal{C} = \mathcal{C}_{b,D}-\{k_1<k_2<\cdots \}$ with $2\leq \lvert D \rvert \leq  b$, and let $q$ be an integer with the property that $\textup{gcd}(q,b)=1$. 
The sequence 
$ \{s_b(k_n) \mod q\}$ is uniformly distributed on $\mathbb{Z} _{q}$ if and only if $\textup{gcd}(q,s)=1$ where $s$ is the largest integer such that $D$ is contained in a progression of step size $s$. 
More generally, $ \{s_b(k_n) \mod q\}$ is uniformly distributed on the subgroup of $\mathbb{Z} _{q}$ generated by $\textup{gcd}(q,s)$. 
\end{proposition}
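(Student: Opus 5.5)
The plan is to use the Weyl criterion on the finite group $\mathbb{Z}_q$. Set $g=\gcd(q,s)$ and let $H=g\mathbb{Z}_q$ be the subgroup generated by $g$. Uniform distribution of $s_b(k_n)\bmod q$ on $H$ is equivalent to the following: for every $a\in\{0,\dots,q-1\}$, the averages $\Expectation_{[N]} e(a s_b(k_n)/q)$ tend to $1$ when $e(ah/q)=1$ for all $h\in H$, that is when $\tfrac{q}{g}\mid a$, and tend to $0$ otherwise. The "if and only if" statement is then the special case $g=1$, where $H=\mathbb{Z}_q$ and only $a=0$ survives. So everything reduces to evaluating the limit in Lemma \ref{l;irrational} with $\alpha=0$ and $\beta=a/q$.

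First I restrict $N$ to powers of $|D|$, so that the average runs over $\mathcal C\cap[0,b^k)$. The Riesz product \eqref{e;widehat} with $\alpha=0$ collapses to $\widehat L(a/q)^k$. This tends to $0$ unless $|\widehat L(a/q)|=1$. By the argument in the proof of Lemma \ref{l;irrational}, $|\widehat L(\theta)|=1$ holds iff $e((d-d')\theta)=1$ for all $d,d'\in D$. Since $s$ is the gcd of the differences $d-d'$ (the largest step of a progression containing $D$), this is equivalent to $s\theta\in\mathbb{Z}$. For $\theta=a/q$ this says $q\mid as$, i.e.\ $\tfrac{q}{g}\mid a$. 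So every character that does not annihilate $H$ has vanishing limit along $N=|D|^k$.

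To pass to general $N$, I reuse the splitting argument behind \eqref{e;limitN}. By Proposition \ref{p;CantorStructure}(ii), $s_b(k_{|D|^in+\ell})=s_b(k_n)+s_b(k_\ell)$. Hence the average over $[N]$ factors, up to an error $O(|D|^i/N)$, as an outer average of $e(\beta s_b(k_n))$ times the inner average $\widehat L(\beta)^i$. When the inner factor tends to zero, choosing $i=i(N)\to\infty$ slowly gives the full limit $0$.

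For the surviving characters, where $\tfrac qg\mid a$, write every digit as $d\equiv d_0 \pmod s$. Then $s_b(k)\equiv (\#\text{digits of }k)\, d_0 \pmod g$, so $e(a s_b(k_n)/q)=e\bigl((a/q)\, d_0\cdot\#\text{digits}\bigr)$. This is identically $1$ precisely when $g\mid d_0$, which holds for instance when $0\in D$ or $D\subset s\mathbb{Z}$.

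I expect the main obstacle to be this last case. In general $s_b(k_n)\bmod g$ lies in a coset $jd_0+H$ that moves with the digit length $j$, so the correct statement is equidistribution on $H$ after translating by $d_0\cdot\#\text{digits}$. I would handle this first by assuming the normalization $0\in D$, or $s\mid d$ for all $d\in D$, as in case (2) of Lemma \ref{l;irrational}. Under that normalization the surviving characters are trivial, and the conclusion is exactly uniform distribution on the subgroup generated by $\gcd(q,s)$. Without it, I would state the coset version. The hypothesis $\gcd(q,b)=1$ does not seem to be needed for $\alpha=0$, and I would check this, since it enters only when $\alpha\neq0$.
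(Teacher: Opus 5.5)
Your proposal is correct and follows the paper's route: Weyl's criterion on $\mathbb{Z}_q$ combined with the exponential-sum analysis of Lemma \ref{l;irrational}, with the coefficient of $k_n$ set to zero. The paper quotes that lemma's general classification, which is where its hypothesis $\gcd(q,b)=1$ enters. Your direct computation of the collapsed Riesz product $\widehat L(a/q)^k$ instead shows that this hypothesis is superfluous, and that the limit vanishes exactly when $\tfrac{q}{g}\nmid a$.

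Your caveat is genuine, and the paper's ``similar considerations'' for $g>1$ pass over it. The subgroup statement needs $g\mid d$ for all $d\in D$, which is automatic when $0\in D$. It fails, for example, for $b=5$, $D=\{1,3\}$, $q=2$: there $s_5(k_n)$ has the parity of the number of digits of $k_n$, so the proportion of even values oscillates between roughly $1/3$ and $2/3$ instead of tending to $1$.
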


\begin{proof}
Suppose $\textup{gcd}(q,s)=1$. 
Observe that for non-zero $\alpha \in \mathbb{Z} _{q}$, 
\begin{align}
\lim_{N\to\infty} 
\Expectation_{[N]}e\bigg(s_b(k_n) \frac{\alpha}{q}\bigg) =0,
\end{align}
due to the fact that we cannot have \[ s \frac{\alpha}{q}=\frac{r}{b^t}, \quad (r,b)=1 \] for any $t \in \mathbb{Z}$ because then $ q \mid \alpha $ which is absurd because  $ \alpha \not \equiv 0 \mod q.$ 
Conversely, if the limit above holds, uniform distribution holds.  

Otherwise, let $g=\textup{gcd}(q,s)>1$, 
We then have  
\begin{align}
\lim_{N\to\infty} 
\mathbb{E} _{[N]} 
e\bigg( s_b(k_n) \frac {g\alpha}{q} \bigg) =
\begin{cases}
1,  & \alpha=0
\\
0,  & \alpha=1,\ldots, \frac{q}{g}-1
\end{cases}
\end{align}
by similar considerations as above. And so, this case follows.  
\end{proof}

Let us discuss the case of just the elements of the Cantor set.  Uniform distribution in this case is much harder to characterize.  Here, we specialize to the case where $0$ is an allowed digit, as that is an important for some of the combinatorial and ergodic theoretic results.

\begin{lemma} \label{l:Allq}
Let $\mathcal C = \mathcal C_{b,D} 
= \{k_1<k_2 < \cdots\}$ be a classical digit set with $0\in D$ and $s$ be the largest integer such that $D$ is contained in a progression of step size $s$. 
Then, for all $q\in \mathbb{N}   $, 
the sequence $ k_n\mod q$ converges to a distribution on $\mathbb{Z} _q$. 
In particular, 
\begin{equation}
\pi_q (a) = \lim_{N\to \infty} \mathbb{E} _{N} 
\mathbf{1} _{k_n \equiv a \mod q} 
\end{equation}
converges for all $a\in \mathbb{Z} _q$. Moreover, we have 
\begin{itemize}
    \item If $(q,b)=1$ then $k_n \mod q $ is asymptotically uniformly distributed $\mod q$    if and only if $(q,s)=1$. 

    \item  If  $q\mid b^j$, for some integer $j\geq 1$,   
    then the asymptotic distribution of $\mathcal C \mod q$ is that of $\mathcal{C} \cap [b^j] \mod q$. 
    
    \item  If $q=ru$, where $(r,b)=1$, and $j$ is the smallest integer such that $u\mid b^j$, then $k_n \mod q$ is asymptotically uniformly distributed on the integers $0\leq a < q$ with $r\mid a$.  
\end{itemize}

\end{lemma}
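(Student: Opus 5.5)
We will work with the Fourier side, using the Weyl criterion on $\mathbb{Z}_q$: the distribution of $k_n \bmod q$ converges if and only if, for each $a\in\mathbb{Z}_q$, the limit
\begin{equation}
\lim_{N\to\infty}\Expectation_{[N]} e\Bigl(\frac{a}{q}k_n\Bigr)
\end{equation}
exists. Existence for all $a$ gives $\pi_q$ by Fourier inversion on $\mathbb{Z}_q$. Fix $\alpha=a/q$ and set $\beta=0$.

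\textbf{Existence of the limit.} Along $N=|D|^i$ the average is the Riesz product $\prod_{j<i}\widehat L(b^j\alpha)$ from \eqref{e;widehat}, and $b^j\alpha \bmod 1$ is eventually periodic. Let $\alpha_j=b^j\alpha$ run through the periodic cycle $\{\gamma_1,\dots,\gamma_p\}$.
\begin{itemize}
\item If some $|\widehat L(\gamma_i)|<1$, the product tends to $0$.
\item Otherwise each $\widehat L(\gamma_i)$ has modulus one. Since $0\in D$, the identity $e(d\gamma)=e(0\cdot\gamma)=1$ for all $d\in D$ forces $\widehat L(\gamma_i)=1$ exactly. So the product stabilizes, and the limit along powers of $|D|$ exists.
\end{itemize}
This is the point where $0\in D$ is used: it rules out the oscillating unimodular phases $e(d_1 b^j\alpha)$ that spoil existence in part (3) of Lemma \ref{l;irrational}. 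To pass from $N=|D|^i$ to general $N$, we reuse the splitting argument behind \eqref{e;limitN}: $k_{|D|^in+\ell}=b^ik_n+k_\ell$, which factors the average as an outer average in $b^i\alpha$ times the inner product. If the inner product tends to zero we are done. Otherwise every factor from index $i_0$ on equals $1$, so $e(b^ik_n\alpha)=1$ for all $n$ once $i\ge i_0$, since $k_n$ is an integer combination of $d\,b^j$ with $e(d b^{i+j}\alpha)=1$. The outer average is then trivially $1$. I expect this step to need the most care, because one must check that unimodularity of $\widehat L(b^j\alpha)$ for all large $j$ really gives $e(d b^j\alpha)=1$ for every $d\in D$ and all large $j$. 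That follows from the $0\in D$ observation above.

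\textbf{The three bullets.} Each comes from identifying when the limit is nonzero, namely exactly when $e(d b^j a/q)=1$ for all $d\in D$ and all large $j$.
\begin{itemize}
\item \emph{Case $(q,b)=1$.} Here $b$ is invertible mod $q$, so $j\mapsto b^j \bmod q$ is purely periodic and the condition holds for all $j\ge 0$. With $j=0$ it says $q\mid da$ for all $d\in D$. Since $0\in D$, the maximal step $s$ is the gcd of $D$, so the condition is $q\mid sa$. A nonzero $a$ with $q\mid sa$ exists if and only if $(q,s)>1$. This gives the stated equivalence.
\item \emph{Case $q\mid b^j$.} Reducing mod $q$ kills every digit beyond position $j$, so $k_n \bmod q$ depends only on the lowest $j$ digits, i.e.\ on $n \bmod |D|^j$ by Proposition \ref{p;CantorStructure}(ii). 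The limiting distribution is therefore that of $\mathcal C\cap[b^j] \bmod q$.
\item \emph{Case $q=ru$.} We apply the Chinese remainder theorem to the joint distribution mod $r$ and mod $u$. The low digits determine the residue mod $u$, while the residue mod $r$ is driven by the high digits and equidistributes independently, via the splitting identity together with the first case, under the hypothesis $(r,s)=1$ implicit there. Concretely, the Fourier coefficient at $a/q=a_1/r+a_2/u$ factors in the limit into a mod-$u$ factor times a mod-$r$ factor, and the mod-$r$ factor vanishes for $a_1\ne0$. The remaining characters are exactly those of the subgroup $r\mathbb{Z}_q$ (up to the finite mod-$u$ distribution), which yields the stated description.
\end{itemize}
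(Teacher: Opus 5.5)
Your proof is correct, and it differs from the paper's mainly in how existence is obtained and in the last bullet.

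\textbf{Existence and the first two bullets.} The paper gets existence of $\lim_N \Expectation_{[N]} e(k_n a/q)$ by citing cases (1)--(2) of Lemma \ref{l;irrational}. Those cases apply because $0\in D$ forces $s\mid d$ for every $d\in D$. So the paper's route rests on the limit-set analysis of $\{b^j\alpha\}$ and the characterization $s\alpha=\frac{a}{b-1}+\frac{r}{b^t}$.

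You instead use two facts: $b^ja/q$ is eventually periodic mod $1$, and $0\in D$ upgrades $\lvert\widehat L\rvert=1$ to $\widehat L=1$. You then reuse the splitting behind \eqref{e;limitN}. This is self-contained and gives a cleaner criterion: all tail factors equal $1$, i.e.\ $q\mid s b^j a$ for large $j$. That criterion is what lets you prove the first bullet in both directions through $q\mid sa$; the paper only argues the ``if'' direction.

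One overstatement: the limit is nonzero \emph{only} when the tail factors are $1$, not \emph{exactly} then. A pre-periodic factor can vanish, e.g.\ $b=4$, $D=\{0,1\}$, $a/q=1/2$. This is harmless. For $(q,b)=1$ there is no pre-period, and in the third bullet you only need the vanishing direction. The second bullet is argued as in the paper.

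\textbf{The third bullet.} You should say plainly that you are proving a corrected statement, rather than claiming your argument ``yields the stated description.'' Read literally, the bullet is false. For $b=3$, $D=\{0,1\}$, $q=6$ ($r=2$, $u=3$), the limiting distribution is uniform on $\{0,1,3,4\}$, not on the multiples of $2$.

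What is true is the following, and your factorization $\Expectation_{\ell}e(ak_\ell/q)\cdot\Expectation_{n}e(a_1b^jk_n/r)$ proves it. Under $(r,s)=1$, the limiting Fourier coefficient at $a$ vanishes unless $r\mid a$. Hence, writing $\mathbb Z_q\cong\mathbb Z_r\times\mathbb Z_u$, the limit law is uniform in the $\mathbb Z_r$ component times the law of $\mathcal C\cap[b^j] \bmod u$.

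You are right that $(r,s)=1$ is needed: for $b=3$, $D=\{0,2\}$, $r=2$, every $k_n$ is even. Note also that the paper's displayed formula for this case has its two cases interchanged. The finite average over $\mathcal C\cap[b^j]$ survives when $r\mid a$, and the limit is $0$ when $r\nmid a$, exactly as your argument shows.
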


Note in particular, that if  $q\mid b^j$, for some integer $j\geq 1$,   the Cantor set could be uniformly distributed on $\mathbb{Z} _{q}$, depending on the choice of $D$, and the smallest choice of $j$ with $q \mid b^j$.

\begin{proof}
We have already established in the first two cases of Lemma \ref{l;irrational} that \[ \lim_{N \to \infty} \Expectation_{[N]}e\left(k_n \frac{a}{q}\right)  \] exists for all values of $a \in \left\{0,\ldots,q-1\right\}$. In addition to that, if $(q,b)=1$ and $(q,s)=1$ then
\begin{equation}
\lim_N \Expectation_{[N]} e\left(k_n \tfrac aq\right) = 
\begin{cases}
1  & a= 0
\\ 
0 & \textup{otherwise},
\end{cases} 
\end{equation}
due to the fact that we cannot have that $s\frac{a}{q}=\frac{r}{b^t}$ because in that case $q \mid a$ unless $a \equiv 0 \mod q$.
This means that in this case, the limiting distribution of $k_n \mod q$ is uniform on $\mathbb{Z} _q$. 

Suppose that every prime factor of $q$ divides the base $b$.  
Fix the smallest integer $j$ such that $q\mid b^j$.  It follows that for any $k \in \mathcal C$ can be written as $k'+b^j k''$, 
where $k'\in \mathcal C\cap \{0,...,b^j-1\}$, and $k''\in \mathcal C$. 
Then, $k \equiv k' \mod q$. 
It follows that the limiting distribution of $k_n \mod q$ 
is that of $\mathcal C\cap [b^j] \mod q$. 
This distribution is need not be uniform, but it assigns positive mass of  $1/q$ to $0$. 

The last case is $q =r u$, where $r>1$ is relatively prime to $b$, and every prime factor of $u$ also divides $b$.  
Then, again fix 
 the smallest integer $j$ such that $u\mid b^j$. 
Now, for integer $0\leq a  < q$, we have 
\begin{align}
\lim_N 
\mathbb{E} _{k'\in \mathcal C \cap [N] } 
e( \tfrac aq k')) 
&=
\begin{cases}
\mathbb{E} _{k'\in \mathcal C \cap [b^j]} 
e( \tfrac aq k')  & r \nmid a 
\\ 
0  & r \mid a ,\ a\neq 0
\end{cases}. 
\end{align}
As the limit exists for all $0\leq a < q$, the distribution of $k_n \mod q$ exists. 
\end{proof}

In the particular case where $a=0$ (that is, elements of the Cantor set divisible by $q$), one has strictly positive density along this residue class.

\begin{lemma}\label{lem:positiveDensity}
    Suppose that $\mathcal{C}$ is a Cantor set with $0\in \mathcal{C}$. Then, for any $q\in \mathbb{N}$, \begin{align*}
        \lim_N \Expectation_{n\in [N]}\mathbf{1}_{k_n\equiv 0\ (\text{mod }q)}>0.
    \end{align*}
\end{lemma}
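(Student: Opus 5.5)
Since $0\in D$, the Cantor set contains $0$, and by Lemma \ref{l:Allq} the limit $\pi_q(0)=\lim_N \Expectation_{n\in[N]}\mathbf 1_{k_n\equiv 0 \bmod q}$ exists; it remains to show it is positive. The plan is to exhibit a union of $\delta$-progressions (with $\delta=\lvert D\rvert$) of positive density on which $k_n\equiv 0\bmod q$, and then compare averages.

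First I factor $q=ru$, where $(r,b)=1$ and every prime factor of $u$ divides $b$. I choose $j$ with $u\mid b^j$. Because $(r,b)=1$, $b$ is invertible mod $r$, so there is a period $t\ge1$ with $b^t\equiv 1\bmod r$; I take $t$ to be a multiple of the order of $b$ modulo $r$. Next I fix a nonzero digit $d\in D$; one exists since $\lvert D\rvert\ge2$. For $m\ge1$ consider the element
\begin{equation}
k=\sum_{i=0}^{m-1} d\, b^{j+it}\in\mathcal C .
\end{equation}
All its digits below position $j$ are $0$, so $u\mid k$. Modulo $r$ we have $k\equiv m\,d\,b^j$, so taking $m=r$ gives $r\mid k$. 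Hence $k\equiv 0 \bmod q$ whenever $(r,u)=1$. In general I would instead just use $q\mid$ a suitable such sum, taking $m$ a multiple of $q$ and handling the $u$ part by shifting positions past $j$ as above; the cleanest route is to replace the modulus by $q'=rb^j$, which is a multiple of $q$. Since $u\mid b^j$, the condition $k\equiv 0 \bmod rb^j$ implies $k\equiv 0\bmod q$. The construction is even simpler if one just takes $k=0$, that is $k_{n_0}=0$ with $n_0=0$ (or the index of $0$ in the paper's indexing).

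The key step uses Proposition \ref{p;CantorStructure}(ii). For $n$ in the $\delta$-progression $\{\delta^{J}m+n_0 : m\ge0\}$ with $n_0<\delta^J$, we have $k_n=b^{J}k_m+k_{n_0}$. I want $b^Jk_m\equiv 0\bmod q$ for a positive proportion of $m$. The $u$-part is automatic once $J\ge j$, but the $r$-part is not. So the condition reduces to $k_m\equiv 0\bmod r$ on a positive-density set of $m$. By Lemma \ref{l:Allq}, first bullet, together with the analysis of Lemma \ref{l;irrational}, the limiting distribution of $k_m\bmod r$ exists. I then need positivity at $0$ for the coprime modulus $r$. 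This is the main obstacle: the distribution may be non-uniform when $(r,s)>1$.

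To handle it, I will show that the limit measure $\pi_r$ is invariant under $a\mapsto b^{J}a+c$ for every $c\in\mathcal C\cap[0,b^J)$. This follows from self-similarity: the set $\{\delta^J m+\ell\}$ partitions $\mathbb N$ into $\delta^J$ progressions, each of density $\delta^{-J}$. Consequently $\pi_r=\Expectation_{c\in\mathcal C\cap[b^J]}\,\pi_r\circ(x\mapsto b^Jx+c)^{-1}$. Iterating this identity, starting from any residue $a$ with $\pi_r(a)>0$, I get $\pi_r(0)\ge \delta^{-J}\pi_r(a)$ as long as some $c$ satisfies $b^Ja+c\equiv0$. To arrange that, I choose $J$ a multiple of $t$, so that $b^J\equiv1\bmod r$, and then need $c\equiv -a\bmod r$ with $c\in\mathcal C$. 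Working instead with $J$ multiple of $t$ and iterating $r-1$ more times lets me append $k_n$'s own shifts: the orbit of $a$ under $x\mapsto x+c$ for $c=d b^{i t}\equiv d$ reaches $a+(r-1)d\cdot$ multiples, but this still only reaches residues in $a+d\mathbb Z$. The cleaner choice is to start at $a=0$ directly, namely the atom contributed by $k_0=0$: iterating the identity with $c=0$ is circular. So I would finally argue directly that $\pi_r(0)\ge\lim_N \Expectation_n \mathbf 1_{n\in P}\mathbf 1_{k_n\equiv0}$, where $P$ is the set of $n$ whose base-$\delta$ digits, in blocks of length $t$, have digit-sum counts summing to a multiple of $r$. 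Writing $\Expectation$ over blocks as a Markov chain on $\mathbb Z_r$ whose steps add $d$ with probability $\ge\delta^{-t}$, an irreducible chain on the subgroup containing $0$, gives positive stationary mass at $0$. This positivity of the stationary mass is the step I expect to require the most care.
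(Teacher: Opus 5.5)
Your reduction to a modulus coprime to the base is the same as the paper's. With $q=ru$, $(r,b)=1$, $u\mid b^J$, you restrict to the progression $\{\delta^J m+n_0\}$ with $k_{n_0}=0$ and use $k_{\delta^Jm+n_0}=b^Jk_m$ to get $\pi_q(0)\ge\delta^{-J}\pi_r(0)$. Note that $(r,u)=1$ is automatic from the factorization, so the explicit elements $\sum_i d\,b^{j+it}$ and the modulus $rb^j$ are not needed.

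The paper then simply cites \cite{SAAVEDRA-ARAYA_2026}, Theorem 5.4(ii), for $\pi_r(0)>0$. You try to prove this instead, and that is where the proposal has a gap:
\begin{itemize}
\item the invariance argument is abandoned;
\item the set $P$ in the Markov-chain paragraph is not well defined, and under the natural reading it equals $\{n\colon k_n\equiv 0 \bmod r\}$, which makes your displayed inequality a tautology;
\item the step law of the chain is never actually written down;
\item you explicitly leave the positivity of the stationary mass open.
\end{itemize}
Moreover, the obstacle you name is illusory. When $(r,s)>1$ the limit law is not uniform on $\mathbb Z_r$, but it is uniform on the subgroup $\gcd(r,s)\mathbb Z_r$, which contains $0$.

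The missing step is short using the Riesz product from the proof of Lemma \ref{l;irrational}. Since the limit exists by Lemma \ref{l:Allq}, it suffices to take $N=\delta^K$. Then $\Expectation_{n<\delta^K}e(ak_n/r)=\prod_{j<K}\widehat L(b^ja/r)$ with $\widehat L(\theta)=\frac{1}{\lvert D\rvert}\sum_{d\in D}e(d\theta)$.
\begin{itemize}
\item Because $0\in D$, $\lvert\widehat L(\theta)\rvert=1$ forces $\widehat L(\theta)=1$, i.e.\ $d\theta\in\mathbb Z$ for all $d\in D$.
\item Because $(b,r)=1$, the condition ``$r\mid db^ja$ for all $d\in D$'' does not depend on $j$.
\item Hence either every factor equals $1$, or every factor has modulus at most some $\rho<1$, since $b^ja/r \bmod 1$ takes finitely many values.
\end{itemize}
So each limit $\lim_N\Expectation_{n\in[N]}e(ak_n/r)$ is $0$ or $1$. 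Therefore $\pi_r(0)=\frac1r\sum_{a\in\mathbb Z_r}\lim_N\Expectation_{n\in[N]}e(ak_n/r)\ge\frac1r$, and in fact $\pi_r(0)=\gcd(r,s)/r$.

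Both of your own routes also close once made precise.
\begin{itemize}
\item \emph{Random walk.} Take $b^t\equiv1\pmod r$. Proposition \ref{p;CantorStructure}(ii) gives $k_n\equiv c_0+\cdots+c_{K-1}\pmod r$ for $n<\delta^{tK}$, with $(c_0,\dots,c_{K-1})$ running bijectively over $(\mathcal C\cap[0,b^t))^K$. So $k_n\bmod r$ has law $\nu^{*K}$, where $\nu$ is the law of $c\bmod r$ for $c$ uniform on $\mathcal C\cap[0,b^t)$. Since $0\in\operatorname{supp}\nu$, the walk is irreducible and aperiodic on the subgroup $H$ generated by $D\bmod r$, so $\nu^{*K}(0)\to 1/\lvert H\rvert$.
\item \emph{Invariance.} Use all $c\in\mathcal C\cap[0,b^J)$, not only $d\,b^{it}$. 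Any $a$ with $\pi_r(a)>0$ lies in $H$, and in the finite group $H$ the element $-a$ is a finite sum $c_1+\cdots+c_\ell$ of such residues. This gives $\pi_r(0)\ge\delta^{-J\ell}\pi_r(a)>0$.
\end{itemize}
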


\begin{proof}
    We know the limit exists by Lemma \ref{l;irrational}, so it suffices to show that you have a lower bound. Write $q=uv$ with $(v,b)=1$ and $p|u\implies p|b$. Then, \begin{align*}
        \mathbf{1}_{k_n\equiv 0\ (\text{mod }q)}\geq \mathbf{1}_{k_n\equiv 0\ (\text{mod }u)}\mathbf{1}_{k_n\equiv 0\ (\text{mod }v)}.
    \end{align*} Choose $L\in \mathbb{N}$ sufficiently large such that $u|b^L$. We'll show here that the limit exists along the subsequence $N=\delta^j$, which will suffice. By partitioning $\delta^L$ into residues modulo $\delta^L$, \begin{align*}
        \sum_{n<\delta^j}\mathbf{1}_{k_n\equiv 0\ (\text{mod }q)}&\geq \sum_{n<\delta^j}\mathbf{1}_{k_n\equiv 0\ (\text{mod }u)}\mathbf{1}_{k_n\equiv 0\ (\text{mod }v)} \\ &=\sum_{c=1}^{\delta^L}\sum_{n'<\delta^{j-L}}\mathbf{1}_{k_{\delta^Ln'+c}\equiv 0\ (\text{mod }u)}\mathbf{1}_{k_{\delta^Ln'+c}\equiv 0\ (\text{mod }v)}
    \end{align*} and since $k_{\delta^Ln'+c}=b^Lk_{n'}+k_c$ (from Proposition \ref{p;CantorStructure}), we have then that \begin{align*}
        \mathbf{1}_{k_{\delta^Ln'+c}\equiv 0\ (\text{mod }u)}=\mathbf{1}_{b^Lk_{n'}+k_c\equiv 0\ (\text{mod }u)}=\mathbf{1}_{k_{c}\equiv 0\ (\text{mod }u)},
    \end{align*} since we chose $L$ such that $u|b^L$. Thus, \begin{align*}
        \sum_{n<\delta^j}\mathbf{1}_{k_n\equiv 0\ (\text{mod }q)}&\geq \sum_{c=1}^{\delta^L}\sum_{n'<\delta^{j-L}}\mathbf{1}_{k_c\equiv 0\ (\text{mod }u)}\mathbf{1}_{b^Lk_{n'}+k_c\equiv 0\ (\text{mod }v)} \\ &=\sum_{c=1}^{\delta^L}\mathbf{1}_{k_c\equiv 0\ (\text{mod }u)}\sum_{n'<\delta^{j-L}}\mathbf{1}_{b^Lk_{n'}+k_c\equiv 0\ (\text{mod }v)}.
    \end{align*} Notice that for fixed $c$, since $(v,b)=1$ we have that $b^L$ is invertible (mod $v$), and so the condition that $b^Lk_{n'}+k_c\equiv 0\ (\text{mod }v)$ is the same as $k_{n'}\equiv -h^Lk_c\ (\text{mod }v)$, where $h$ is the inverse of $b$, modulo $v$. So, the inner sum is counting elements of $\mathcal{C}$ that are in a particular residue class mod $v$. Dividing by the number of elements and taking $j\rightarrow \infty$, we produce that \begin{align*}
        \Expectation_{n<\delta^j}\mathbf{1}_{k_n\equiv 0\ (\text{mod }q)}\geq \delta^{-L}\sum_{c=1}^{\delta^L}\mathbf{1}_{k_c\equiv 0\ (\text{mod }u)}\alpha(c,v),
    \end{align*} where \begin{align*}
        \alpha(c,v):=\lim_j \Expectation_{n'<\delta^j}\mathbf{1}_{k_{n'}\equiv -h^Lk_c\ (\text{mod }v)}.
    \end{align*} Since $0=k_1$ is equivalent to zero (mod $u$) we trivially have that \begin{align*}
        \Expectation_{n<\delta^j}\mathbf{1}_{k_n\equiv 0\ (\text{mod }q)}\geq \delta^{-L}\alpha(1,v)=\delta^{-L}\lim_j \Expectation_{n'<\delta^j}\mathbf{1}_{k_{n'}\equiv 0\ (\text{mod }v)}.
    \end{align*} 

    It then suffices to consider the case where the modulus $v$ is coprime to the base, and zero is a digit. This follows from \cite[Theorem 5.4, (ii)]{SAAVEDRA-ARAYA_2026}, and so we are done.
\end{proof}

    We record here a result about the  distribution of 
of the pair $(k, s_b(k))$ in the modular case, for integers $k$ in a Cantor set, where $s_b(k)$ is the sum of digits function, as in Proposition \ref{p;CantorStructure}. 

\begin{theorem}
Let $\mathcal{C} = \mathcal{C}_{b,D}-\{c_1<c_2<\cdots \}$ with $2\leq \lvert D \rvert <  b$. Let $s$ be the largest integer such that $D$ is contained in a progression of step size $s$.  Let $a,a'$ be integers, and consider the distributions 
\begin{equation}
    (c_n, s_b(c_n)) \mod (a,a'), \quad n\leq N. 
\end{equation}
These conclusions hold. 
\begin{enumerate}
    \item  If $s=1$, or $s>1$ and $s\mid d$ for all $d\in D$, then the limiting distribution 
    exists for all $a,a'$. 

    \item The distribution is asymptotically uniform if 
    \begin{itemize}
        \item  $s=1$, and  either   $\textup{gcd}(b(b-1),a)=1$, or $gcd(b-1,  a')=1$.  
        \item  $s>1$, and either  $\textup{gcd}(sb(b-1),a)=1$, or $gcd(s_b(b-1),  a')=1$.    
    \end{itemize}
    
\end{enumerate}

\end{theorem}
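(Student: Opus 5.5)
We will work through the Weyl criterion on the finite group $\mathbb{Z}_a\times\mathbb{Z}_{a'}$. The joint distribution of $(c_n,s_b(c_n)) \bmod (a,a')$ exists if and only if, for every pair $(x,y)\in\mathbb{Z}_a\times\mathbb{Z}_{a'}$, the limit
\begin{equation}
\lim_{N\to\infty}\Expectation_{[N]} e\Bigl(c_n\tfrac{x}{a}+s_b(c_n)\tfrac{y}{a'}\Bigr)
\end{equation}
exists. The distribution is asymptotically uniform if and only if each such limit vanishes for $(x,y)\neq(0,0)$. This reduces everything to Lemma \ref{l;irrational} with $\alpha=x/a$ and $\beta=y/a'$.

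Part (1) then follows directly. Under either hypothesis in (1) we are in case (1) or case (2) of Lemma \ref{l;irrational}, so the limit exists for every rational $\alpha,\beta$. Hence all Fourier coefficients of the empirical measures converge. By Fourier inversion on the finite group, the empirical measures themselves converge.

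For part (2), take $s=1$ and suppose the limit is nonzero for some $(x,y)\ne(0,0)$. By Lemma \ref{l;irrational}(1) there are integers $u,r,t$ with
\begin{equation}
\tfrac{x}{a}=\tfrac{u}{b-1}+\tfrac{r}{b^t},\qquad \tfrac{y}{a'}\equiv-\tfrac{u}{b-1}\pmod 1 .
\end{equation}
Suppose first that $\gcd(b(b-1),a)=1$. Multiplying the first identity by $a(b-1)b^t$ shows that $a$ divides $x(b-1)b^t$. Since $a$ is coprime to $b(b-1)$, this forces $a\mid x$, so $x\equiv0$. Then $\tfrac{u}{b-1}\equiv-\tfrac{r}{b^t}\pmod 1$; the left side has denominator dividing $b-1$ and the right side denominator dividing $b^t$, and these are coprime, so both are integers. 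It follows that $y/a'\equiv0$ as well, contradicting $(x,y)\neq(0,0)$.

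Now suppose instead that $\gcd(b-1,a')=1$. The number $y/a'$ has denominator dividing $a'$ and, by the second identity, also dividing $b-1$. These are coprime, so $y\equiv0$ and $u/(b-1)\in\mathbb{Z}$. Then $x/a=r/b^t$ modulo $1$. Here I expect the \emph{main obstacle}. Nothing so far rules out nonzero $b$-adic $x/a$, for instance when $a\mid b^t$, and in that case the limit need not vanish, since the distribution mod $b^j$ is that of $\mathcal{C}\cap[b^j]$. I would first try to show that the relevant Riesz product $\prod_j \widehat L(b^j x/a)$ still vanishes. If that fails, the hypothesis must be read as $\gcd(b,a)=1$ together with $\gcd(b-1,a')=1$, in which case the $b$-adic part is again forced to be trivial.

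The case $s>1$ runs the same way, with $s\alpha$ and $s\beta$ in place of $\alpha$ and $\beta$, using Lemma \ref{l;irrational}(2). The condition $\gcd(s,a)=1$ lets us cancel the factor $s$ when deducing $a\mid x$. When $s\nmid d$ for some $d\in D$, the factorization $\widehat L(\alpha)=e(d_1\alpha)\widehat{L'}(s\alpha)$ from the proof of Lemma \ref{l;irrational} shows that the product still tends to $0$ outside the exceptional set, so uniformity does not require existence of the limit in the exceptional cases.
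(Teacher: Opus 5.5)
Your reduction (Weyl's criterion on $\mathbb{Z}_a\times\mathbb{Z}_{a'}$, then Lemma~\ref{l;irrational} with $(\alpha,\beta)=(x/a,y/a')$) is exactly the paper's argument. Your treatment of part (1), and of the alternative $\gcd(b(b-1),a)=1$ when $s=1$, is correct.

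The obstacle you isolate in the alternative $\gcd(b-1,a')=1$ is real. Your first plan, showing that the Riesz product still vanishes, cannot work, because the claim is false as stated. Take $a=b$ and any $a'$ coprime to $b-1$ (e.g.\ $a'=1$). Then $c_n \bmod b$ is the last digit of $c_n$, which lies in $D$. Since $\lvert D\rvert<b$, the distribution is not uniform. In Fourier terms, when $x/a=r/b^t$ the product $\prod_{j}\widehat L(b^jx/a)$ reduces to the finite product $\prod_{j<t}\widehat L(b^jr/b^t)$, which has no reason to vanish. The paper's proof makes the same deduction you do and simply asserts the conclusion, overlooking this $b$-adic part. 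The hypothesis must be strengthened as in your second option, to $\gcd(b,a)=1$ together with $\gcd(b-1,a')=1$, and then your argument closes.

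Your $s>1$ paragraph has a further gap that you did not flag. Under $\gcd(sb(b-1),a)=1$ you do get $x\equiv 0$ and $u/(b-1)\in\mathbb{Z}$. But then you only get $sy/a'\in\mathbb{Z}$, which does not force $y\equiv 0$ when $\gcd(s,a')>1$. This too is a defect of the statement, not of your write-up.
\begin{itemize}
\item For $b=5$, $D=\{0,2\}$ (so $s=2$), every $s_b(c_n)$ is even. With $a=3$, $a'=2$, the second coordinate is identically $0$.
\item Likewise every $c_n$ is even. So the second alternative (where ``$s_b(b-1)$'' presumably means $s(b-1)$) fails for $a=2$, $a'=1$.
\end{itemize}

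Your closing remark about digit sets with $s\nmid d$ does not help, because the hypotheses do not exclude the exceptional characters. For example, $(\alpha,\beta)=(0,1/s)$ is admissible when $s\mid a'$. Since all digits are congruent mod $s$, $\lvert\widehat L(1/s)\rvert=1$, so every partial Riesz product has modulus one.

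A correct version for $s>1$ (with $s\mid d$ for all $d\in D$) needs, in addition, $\gcd(s,a)=\gcd(s,a')=1$, and also $\gcd(b,a)=1$ in the second alternative. With these, your cancellation argument runs exactly as for $s=1$. The paper's proof does not treat $s>1$ in part (2) at all.
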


\begin{proof}
    If $s=1$ or $s>1$ and $s\mid d$ for all $d\in D$, we have already seen that 
    \begin{equation}
        \lim_N \Expectation_{\mathcal{ C } \cap [N]} e(c\alpha + s_b(c)\beta) 
    \end{equation}
    exists for all $\alpha$ and $\beta$. This implies that the limiting distribution of the pair $(c,s_b(c))$ exists mod $(a,a')$ for all $(a,a')$.  
    This follows from points (1) and (2) of Lemma \ref{l;irrational}.  

  We also know that if $s=1$, and the limit is non-zero, then $\alpha = \frac u{b-1}+ \frac {v}{b^t}$, 
  for integers $u,v, t$. And $\beta = - \frac u{b-1}$. 
  Thus, the limit is zero if either   $\textup{gcd}(b(b-1),a)=1$, or $\textup{gcd}(b-1,  a')=1$.

\end{proof}

\section{Ergodic Theorems} \label{s;meanErgodic}
We specialize the discussion of the previous section to that of mean ergodic theorems along Cantor sets of integers.   The core is this Theorem for norm convergence. 

\begin{theorem} \label{t:ergodic}
Let $\mathcal C = \mathcal C _{b,D}$ satisfy 
$D \subset \{0,1, \ldots, d-1\}$, and let $s$ the largest  step size of a progression that contains $D$. 
Assume that  $s\mid d$ for all $d\in D$. 
Then,  for commuting unitary operators, $(U,V)$ on a Hilbert space $H$, we have 
\begin{equation}
  \lim_N  \Expectation_{n\in   [N]} U^{k_n} V^{s_b(k_n)} x\quad \textup{exists}. 
 \end{equation}
for all $x\in H$.   
The limit depends only on the point spectra of $(U,V)$ at points $(\alpha ,\beta )$ for which   $s\alpha = \frac{a}{b-1}+\frac {r}{b^t}$, for integers $r$ and $t$, and $s\beta = - \frac{a}{b-1}$. 

\end{theorem}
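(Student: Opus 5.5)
The plan is to use the spectral theorem for the pair of commuting unitaries $(U,V)$ and reduce the claim to the scalar limits in Lemma \ref{l;irrational}.

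Fix $x\in H$. By the spectral theorem for commuting unitaries there is a projection-valued measure $E$ on $\mathbb{T}^2$ with
\begin{equation}
U^{m}V^{\ell} = \int_{\mathbb{T}^2} e(m\alpha+\ell\beta)\, dE(\alpha,\beta).
\end{equation}
Let $\mu_x(\cdot)=\langle E(\cdot)x,x\rangle$ be the associated finite positive measure. Then
\begin{equation}
A_N x \coloneqq \Expectation_{n\in[N]} U^{k_n}V^{s_b(k_n)}x = \int_{\mathbb{T}^2} \phi_N(\alpha,\beta)\, dE(\alpha,\beta)\,x,
\qquad \phi_N(\alpha,\beta)= \Expectation_{n\in[N]} e(k_n\alpha+s_b(k_n)\beta).
\end{equation}
Since $s\mid d$ for all $d\in D$, we are in case (1) or case (2) of Lemma \ref{l;irrational}. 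Hence $\phi(\alpha,\beta)=\lim_N\phi_N(\alpha,\beta)$ exists for every $(\alpha,\beta)$. Moreover $\phi$ vanishes off the countable set
\begin{equation}
\Lambda=\Bigl\{(\alpha,\beta)\colon s\alpha=\tfrac{a}{b-1}+\tfrac{r}{b^t},\ s\beta=-\tfrac{a}{b-1}\Bigr\}.
\end{equation}
The functions $\phi_N$ are bounded by $1$ and converge pointwise. Dominated convergence in $L^2(\mu_x)$ then gives
\begin{equation}
\Bigl\lVert A_Nx-\int\phi\,dE\,x\Bigr\rVert^2=\int_{\mathbb{T}^2}\lvert\phi_N-\phi\rvert^2\,d\mu_x\to 0.
\end{equation}
So the limit exists and equals $\int\phi\,dE\,x$.

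For the second assertion, $\phi$ is supported on the countable set $\Lambda$, so
\begin{equation}
\int\phi\,dE\,x=\sum_{\lambda\in\Lambda}\phi(\lambda)E(\{\lambda\})x.
\end{equation}
Each $E(\{\lambda\})$ is the projection onto the joint eigenspace of $(U,V)$ at $\lambda$. The limit therefore depends only on the point spectrum of $(U,V)$ at points of $\Lambda$, namely those $(\alpha,\beta)$ with $s\alpha = \frac{a}{b-1}+\frac{r}{b^t}$ and $s\beta=-\frac{a}{b-1}$.

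The main obstacle is existence of the pointwise limit $\phi$ at the exceptional points of $\Lambda$. This is precisely where the hypothesis $s\mid d$ enters. Case (3) of Lemma \ref{l;irrational} shows the scalar limit can fail to exist there, and in that case the operator averages would fail to converge on the corresponding eigenvectors. Under our hypothesis the reduction $D'=D/s$ gives $\widehat L(\alpha)=\widehat{L'}(s\alpha)$, so existence follows from the $s=1$ analysis. That analysis includes the passage from $N=b^k$, or rather $N$ a power of $|D|$, to general $N$ via \eqref{e;limitN}. I would simply cite Lemma \ref{l;irrational} for this. Note also that the statement writes $D\subset\{0,\dots,d-1\}$; I read this as $D\subset\{0,\dots,b-1\}$, as needed for $s_b$ to be the digit sum.
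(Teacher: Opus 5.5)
Your proposal is correct and takes essentially the same route as the paper. The spectral theorem for the commuting pair $(U,V)$ reduces the claim to the scalar limits in conclusions (1) and (2) of Lemma \ref{l;irrational}; you have simply made explicit the dominated-convergence step and the reduction of $\int\phi\,dE\,x$ to a sum over the joint eigenspaces at the points of the countable set $\Lambda$, which the paper's brief proof leaves implicit.
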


This follows simply because the spectra of the shift on $\mathbb Z_q$ consists only of $q$th roots of the identity.

\begin{proof}
Recalling the spectral theorem, it suffices to study the corresponding exponential sums.  
Namely, that the limit below exists 
\begin{equation}
  \lim_N  \Expectation_{n \in [N]} e(k_n \alpha +s_b(k_n)\beta) ,  
 \end{equation}
and if non-zero, $s\alpha=\frac{a}{b-1}+\frac{r}{b^t} $ and $s\beta=-\frac{a}{b-1}$ for some integers $a,r,t$.
This is a consequence of Lemma \ref{l;irrational}.  See the first and second conclusions therein.   
\end{proof}

We are interested in polynomial versions of the result above, which we can obtain under stronger  assumptions.

\begin{theorem} \label{t:weaklyMixingPET}
Let $\mathcal C = \mathcal C _{b,D}$ satisfy 
$D \subset \{0,1, \ldots, d-1\}$, and let $s$ the largest  step size of a progression that contains $D$. 
Assume that  $s\mid d$ for all $d\in D$.  Then for all commuting unitary operators, $(U,V)$ on a Hilbert space $\mathcal H$, we have for any integral polynomial $p \colon \mathbb{Z} ^2  \to \mathbb{Z} ^2 $, 
\begin{equation}
  \lim_N  \Expectation_{n\in   [N]}(U,V) ^{p (k_n, \sigma(k_n))}  x  
  \quad \textup{exists in norm} 
 \end{equation}
for all $x\in \mathcal H$. The limit only upon the projection of $x$ onto the rational specta of $(U,V)$. 
Above, by $(U,V) ^{m,n}$ we mean $U ^{m}V ^{n}$.  
\end{theorem}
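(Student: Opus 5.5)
Throughout, $\sigma = s_b$. The plan is to reduce the statement, via the spectral theorem for the commuting pair $(U,V)$, to exponential sums. Let $\mu_x$ be the joint spectral measure of $x$ on $\mathbb{T}^2$. Then
\begin{equation*}
\Bigl\lVert \Expectation_{n\in[N]} (U,V)^{p(k_n,\sigma(k_n))}x - \Expectation_{n\in[M]} (U,V)^{p(k_n,\sigma(k_n))}x \Bigr\rVert^2
= \int_{\mathbb{T}^2} \bigl\lvert S_N(\theta) - S_M(\theta)\bigr\rvert^2 \, d\mu_x(\theta),
\end{equation*}
where $S_N(\theta) = \Expectation_{n\in[N]} e(\theta\cdot p(k_n,\sigma(k_n)))$. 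Since $\lvert S_N\rvert\leq 1$, dominated convergence shows that it suffices to prove that $S_N(\theta)$ converges for every $\theta=(\alpha,\beta)\in\mathbb{T}^2$.

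Split $\theta$ according to whether the polynomial $\theta\cdot p$ on $\mathbb{Z}^2$ is irrational.

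\emph{Irrational case.} If $\theta\cdot p$ has an irrational non-constant coefficient, then Theorem \ref{t:polyUniform} gives $S_N(\theta)\to 0$. More precisely, we use the inductive argument in its proof, where the $\textup{UD}_\ell$ induction only uses Lemma \ref{l;irrational} and Lemma \ref{l:Diff}. If $\theta\cdot p$ has only rational coefficients but $\theta$ is irrational, then $\theta\cdot p$ is a rational polynomial, and we fall into the next case. Thus the only $\theta$ contributing to the limit are those with $\theta\cdot p$ rational. Such $\theta$ form a countable set (rational points, together with cancellations along $\ker$-directions of $p$, which are still handled by the rationality of $\theta\cdot p$). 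The limit therefore only sees the spectral measure restricted to these points, i.e.\ the projection onto the "rational spectra'' of $(U,V)$ relative to $p$, as stated.

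\emph{Rational case.} If $\theta\cdot p$ has rational coefficients, let $q$ be a common denominator. Then $e(\theta\cdot p(k,\sigma(k)))$ depends only on $(k \bmod q, \sigma(k)\bmod q)$, and $S_N(\theta)$ is a finite linear combination of the densities
\begin{equation*}
\Expectation_{n\in[N]} \mathbf 1_{\{k_n\equiv r,\ \sigma(k_n)\equiv r' \bmod q\}}.
\end{equation*}
Expanding these indicators in characters of $\mathbb{Z}_q^2$ reduces the problem to the convergence of $\Expectation_{[N]} e(k_n a/q + \sigma(k_n) a'/q)$. This is exactly the existence statement in conclusions (1) and (2) of Lemma \ref{l;irrational}, which apply under the hypothesis $s\mid d$ for all $d\in D$ (or $s=1$). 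This gives the existence of the limit. It is the same reasoning as in the first part of the proof of the modular theorem in \S\ref{sec:modular}.

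The main obstacle is the irrational case: the full polynomial Weyl statement for the pair $(k_n,\sigma(k_n))$ under only the hypothesis $s\mid d$, rather than $0\in D$ as in Theorem \ref{t:polyUniform}. I would first reduce to $s=1$ by passing to $D'=\{d/s\}$, noting $k_n=s k'_n$ and $\sigma(k_n)=s\,\sigma'(k'_n)$, where the primes refer to the Cantor set $\mathcal C_{b,D'}$. Under this substitution the polynomial $p$ becomes another integral polynomial, so the problem transfers. I would then rerun the van der Corput induction, using Lemma \ref{l:vdc} together with Lemma \ref{l:Diff} for the differencing step. The delicate point is the base case $\textup{UD}_1$ when $0\notin D$. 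There, Proposition \ref{p;CantorStructure}(iii) still decomposes the residue classes into $\lvert D\rvert$-progressions, and along such progressions the ordinary (non-uniform) limits of Lemma \ref{l;irrational}, which vanish for irrational $\alpha$ or $\beta$, suffice, since we only need limits over $[N]$ and not the uniform version.
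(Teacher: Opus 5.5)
Your proof has the same skeleton as the paper's: the spectral theorem, convergence of $(k_n,\sigma(k_n)) \bmod q$ via conclusions (1)--(2) of Lemma \ref{l;irrational} for the rational frequencies, and Theorem \ref{t:polyUniform} for the rest. Your organization is better, though. Using dominated convergence against $\mu_x$ instead of the decomposition $x=y+z$ is cosmetic. Splitting according to whether $\theta\cdot p$, rather than $\theta$, has an irrational non-constant coefficient is not cosmetic. The paper asserts $S_N(\theta)\to 0$ whenever $\theta$ has an irrational coordinate, and this fails for degenerate $p$: for $p(x,y)=(x,x)$ and $\theta=(\alpha,-\alpha)$ one has $S_N\equiv 1$. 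So the paper's treatment of $y$ breaks down whenever $\mu_y$ charges such $\theta$, whereas your rational case covers these $\theta$ automatically. (Strip the constant term of $\theta\cdot p$ before taking a common denominator; it only contributes a unimodular factor.)

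For $0\in D$ your argument is complete, given Theorem \ref{t:polyUniform}. Two things need fixing.

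First, the set $R_p=\{\theta:\theta\cdot p\text{ has rational non-constant coefficients}\}$ is countable only when the non-constant parts of $p_1,p_2$ are linearly independent over $\mathbb{Q}$. For $p=(x,x)$ it is the union of the lines $\alpha+\beta\in\mathbb{Q}$, and for $p=(0,y)$ it is $\mathbb{T}\times\mathbb{Q}$. What you actually prove is that the limit equals $\int_{R_p}S\,dE\,x$, so it depends only on $E(R_p)x$. The point-spectrum formulation in the statement is false in general: take $U$ the bilateral shift on $\ell^2(\mathbb{Z})$, $V=U^{-1}$ and $p=(x,x)$; every average equals $x$, yet $(U,V)$ has no eigenvalues.

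Second, the case $0\notin D$ is allowed by the hypothesis $s\mid d$, and passing to $D'=\{d/s\}$ does not remove it. There Theorem \ref{t:polyUniform} does not apply (the paper cites it anyway), and your repair is only a sketch whose key step is unsupported. Proposition \ref{p;CantorStructure}(ii) fails for the increasing enumeration when $0\notin D$: for $b=3$, $D=\{1,2\}$ one has $k_2\neq 3k_1+k_0$. Part (iii), Lemma \ref{l:Diff}, and the passage to general $N$ in Lemma \ref{l;irrational} all rest on (ii). You would need to re-prove the self-similarity in the form $k_{|D|^in'+j'}=b^ik_{n'}+\kappa_i(j')$, where $\kappa_i(j')$ is an $i$-digit block and $j'$ runs over a shifted window of length $|D|^i$, discarding the $O(h)$ indices at each change of digit length.

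Finally, if you rerun the van der Corput induction, the differenced polynomial need not be irrational for an individual pair $(k',h)$. For example, take $p=\alpha y^2$ whenever $s_b(k'')=s_b(k')$; for $b=5$, $D=\{0,1,2\}$, $h=2$ this happens for a positive proportion of $k'$. So you must show that the proportion of such degenerate pairs, averaged over $1\le h\le H$, tends to $0$ as $H\to\infty$.
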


\begin{proof} 
 Recalling Lemma \ref{l;irrational}, 
 it follows that for all $(q_1,q_2) \in \mathbb N ^2$, that the distribution of 
 \begin{equation}
     \{ (k_n, \sigma(k_n) ) \mod (q_1,q_2) \colon n\leq N\} 
 \end{equation}
 converges to a fixed distribution.   This implies that 
  \begin{equation}
     \{ p(k_n, \sigma(k_n) ) \mod (q_1,q_2) \colon n\leq N\} 
 \end{equation}
 converges to a distribution, for any polynomial $p$.  

 We take $x\in \mathcal H$, and write it as $x=y+z$, where $z$ is the projection onto all the rational point spectra of $(U,V)$, thus 
 \begin{equation}
     z = \sum_{ (r,q)\in \mathbb Q ^2 \cap \mathbb T^2 } E _{r,q} x
 \end{equation}
 where $E_{r,q}$ is the projection onto the eigenspace with eigenvalues $(r,q)$ for the pair $(U,V)$.  
 If $z'$ is any finite dimesional projection of $z$, it follows that 
 \begin{equation}
  \lim_N  \Expectation_{k\in \mathcal C \cap [0,N]}(U,V) ^{p (k_n, \sigma(k_n))} z'  
  \quad \textup{exists in norm} 
 \end{equation}
 by the convergence in distributions mod $(q_1,q_2)$.   It is then easy to see that one has convergence in norm for $z$. 

 It remains to prove convergence in norm to $0$ for $y$. 
 By the spectral theorem, we have 
 \begin{equation} \label{spectral}
     \Expectation_{k\in \mathcal C \cap [0,N]}
     (U,V) ^{p (k_n, \sigma(k_n))}  y 
     = \int\!\!\int_{ \mathbb T ^2} 
     \Expectation_{k\in \mathcal C \cap [0,N]} e( \alpha \cdot {p (k_n, \sigma(k_n))} ) \; E _{d \alpha } y .  
 \end{equation}
It suffices to show that the trigonometric sum converges to $0$ pointwise for all $ \alpha \in \mathbb T ^2  $ with at least one coordinate being irrational.   
But this follows from Theorem \ref{t:polyUniform}.

\end{proof}

The proof above is purely an $L^2$ result.  And, it holds in measure preserving systems. Namely, for $(X,\mathcal A, m, T)$ a measure preserving system, and $f\in L^2$, we have convergence in $L^2$ for the averages 
\begin{equation}
 \Expectation _{ N}
f(T^{p(n)}  ) . 
\end{equation}
It is natural to ask for other forms of convergence, but 
the pointwise extensions seem to be far from clear. 
Kra and Shalom \cite{kra2025ergodicaverageslargeintersection} raise this question, focusing on the case of rational spectra IP sets.  

\begin{question}
    For a Cantor set $\mathcal C$, can a pointwise ergodic theorem hold on $L^2(X)$ for all measure preserving systems $(X,m,T)$?  Namely, do the limits below exist a.e.? 
    \begin{equation}
        \lim_N \Expectation_{\mathcal C \cap [N]}  f(T^{k} x), \qquad f\in L^2(X). 
    \end{equation}
\end{question}

A key point in the proof of existing \emph{pointwise} ergodic theorems  for sequences of arithmetic type have two elements.  First that one can restrict to primary attention to rational frequencies.  
And, second,   the rational frequencies are further gradated by a notion of `height' or `complexity.' 
For polynomials, the height of rational $p/q$, in lowest terms, is essentially $q$, since the maximum value of relevant Gauss sums are dominated by $q ^{-c}$, for $0<c<1$.  To specialize to the square integers, for instance, one has the Gauss sum estimate 
\begin{equation}
    \lim_N  \Bigl\lvert  \Expectation _N e( p n^2/q) \Bigr\rvert  \ll \frac 1 {\sqrt q}. 
\end{equation}
It turns out that there are about $q ^2$ rationals of height about $q$. 

The situation for the Cantor sets is dramatically different in the linear case.  If $0$ is in the digit set, the height of rational $\alpha $ is given by the number of $\rho^{h(\alpha)}$, where $0<\rho <1$, and $h(n)$ is
the number of non-zero base $b$ digits in the expansion of $\alpha$.  Each of the rationals $\alpha = b^{-k}$ for $k\in \mathbb{N}$ have height $1$. 
It follows that  \emph{there are infinitely many rationals of a fixed height.}  
The known approaches to the pointwise ergodic  theorem for arithmetic sequences will fail. 

We detail here a toy model of an essential aspect of a proof of a pointwise result for $L^2(\mathbb{R})$ functions.  We state it on the real line, where it has an easier and equivalent formulation.  
Let $\widehat f$ denote the Fourier transform of $f\in L^2 (\mathbb R)$. 
For a subset $E \subset \mathbb{R}$, set $\widehat {S_E f} \coloneqq \mathbf{1}_E \cdot \widehat f $.  
Define a sequence of sets $E_1, E_2, \ldots $ by 
\begin{equation}
    E_k = \bigcup_{j=1} ^{k-2} (2^{-j} - 2 ^{-k},2^{-j} + 2 ^{-k} ) . 
\end{equation}
Our question is this: Does this maximal inequality hold? 
\begin{equation}
    \bigl\lVert  \sup_{k>3} \lvert S_{E_k} f \rvert \rVert_2 \ll \lVert f\rVert_2. 
\end{equation}

\begin{figure}[ht]
    \centering
    \begin{tikzpicture}[yscale=3]
        \draw[<->] (-1,0) -- (9,0); 
                \foreach  \x/\y in { 7.5/8.5, 3.5/4.5, 1.5/2.5 } 
        \draw[thick,|-|]  (\x,.25) -- (\y,.25);\textbf{}
        \foreach  \x/\y in { 7.75/8.25, 3.75/4.25, 1.75/2.25,  .25/.75 } 
        \draw[thick,|-|]  (\x,.125) -- (\y,.125); 
        \foreach  \x/\y in { 7.875/8.125, 3.875/4.125, 1.875/2.125,  .375/.625,  .125/.325} 
        \draw[thick,|-|]  (\x,0) -- (\y,0); 
       \draw (0,.125) -- (0,-.125) node[below] {$0$}; 
    \end{tikzpicture}
    \caption{An illustration of the set $E_4$ and $E_5$ above  the axis and $E_6$ on the axis. Note that additional base points are added as $E_k$ increases.}
    \label{fig:enter-label}
\end{figure}
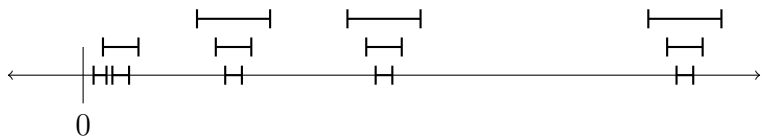

In the continuous setting, there are examples of Cantor sets for which one can obtain maximal theorems. See the work of {\L}aba and Pramanik \cite{MR2805064}.  
These examples satisfy certain decay properties of their Fourier transform. 
It is natural to ask if there is not some arithmetic counterpart to these results.

\section{Metric Pair Correlation}  
\label{sec;PairCorellation}

We give sufficient conditions so that for all most all $\alpha \in [0,1]$, that one has asymptotic \emph{Poissian pair correlation} for 
$\{ \alpha k \colon k\in \mathcal{C}\}$
 for some Cantor sets $\mathcal C$.   
We recall the definition here. 

\begin{definition}  Given a sequence $\{\theta_n \} \subset \mathbb R$, for $0<s<1$, set  
\begin{equation}
    R_2 (s,\{\theta_n \}, N) = R(s,N)
    \coloneqq 
    N^{-1} \lvert \{ 1\leq j\neq k \leq N \colon 
    \lVert \theta_j - \theta_k \rVert < s/N\}\rvert . 
\end{equation}
We say that $\{\theta_n \}$ has \emph{(Poissonian) Pair Correlation} if $R_2(s,N) \to 2s$ as $N\to \infty$, 
for all $0<s<1$.  
\end{definition}

Here, we focus only on pair correlation.  
A sequence with  Pair correlation is uniformly distributed. And, if $\{\theta_n\}$ are iid uniformly $[0,1]$ random variables, then it has Pair Correlation. The interest in the pair correlation for specific sequences has a deep set of connections to a range of issues.
We refer the reader to the references in \cites{MR3736514,MR4634690} for more information, and as well the paper \cite{MR4542733} that establishes, for instance,  a Weyl type result for pair and higher order correlations.

Here, using the main result from \cite{MR3736514}, we  provide a simple sufficient condition 
on a Cantor set $\mathcal C$  for the sequence 
$\{ \alpha k \colon k\in \mathcal{C}\}$
 to have pair correlation, except for a very thin set of $\alpha$.

 \begin{proposition}  
 Let $b \geq 5$, and $D\subset \{0 , 1, \ldots, \lfloor b/2\rfloor \}$. Assume that $D$ satisfies, for some $\epsilon >0$, 
 \begin{align} \label{eED}
    E(D) = \sum_{\substack{a,b,c,d\in D \\ a+b=c+d}} 
    \leq  \lvert D \rvert ^{3-\epsilon}, 
 \end{align}
 Then, for almost all $\alpha \in [0,1]$, 
 the sequence $\{ \alpha k \colon k \in \mathcal C(b,D)\}$ has pair correlation.  
 Indeed, the set of $\alpha $ for which pair correlation does not hold has Hausdorff dimension 
 at most 
 \begin{equation}
     1 - \frac{\epsilon}{ 3 + \frac{\log b}{\log \lvert D\rvert }} 
 \end{equation}
  \end{proposition}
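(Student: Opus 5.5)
The plan is to apply the main result of \cite{MR3736514} (Aistleitner--Larcher--Lewko, with the Bloom--Walker type refinement). That result reduces metric pair correlation of $\{\alpha a_n\}$, for a strictly increasing integer sequence $(a_n)$, to a bound on the additive energy of its initial segments. If $A_N=\{a_1,\dots,a_N\}$ satisfies
\[ E(A_N)\ll N^{3-\eta} \]
for some $\eta>0$, then $\{\alpha a_n\}$ has pair correlation for almost every $\alpha$. Moreover, the exceptional set has Hausdorff dimension bounded by an explicit function of $\eta$. We apply this with $a_n=k_n$, the ordered elements of $\mathcal C=\mathcal C(b,D)$. The whole task is therefore to bound $E(\mathcal C\cap[N])$.

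First take $N=|D|^n$, so that $A_N=\mathcal C\cap[0,b^n)$ is the set of $\sum_{j<n}d_jb^j$ with $d_j\in D$. The restriction $D\subset\{0,\dots,\lfloor b/2\rfloor\}$ gives $d_1+d_2\le b-1$ for any two digits, so sums of two elements of $A_N$ are computed digitwise without carries. Hence $x+y=z+w$ in $A_N$ holds if and only if $x_j+y_j=z_j+w_j$ in every digit position $j$. This gives exact multiplicativity,
\[ E(A_N)=E(D)^n\le |D|^{n(3-\epsilon)}=N^{3-\epsilon}. \]
The bound $b\ge5$ just ensures there is room for such $D$ with $|D|\ge2$.

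For general $N$, write $|D|^{n}\le N<|D|^{n+1}$. By Proposition \ref{p;CantorStructure}(i), $A_N\subset A_{|D|^{n+1}}$, and energy is monotone under inclusion. So
\[ E(A_N)\le |D|^{(n+1)(3-\epsilon)}\le |D|^{3}N^{3-\epsilon}, \]
which is still $\ll N^{3-\epsilon}$ with a constant depending only on $D$. Thus the hypothesis of \cite{MR3736514} holds with $\eta=\epsilon$, and almost-everywhere pair correlation follows.

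The step I expect to need care is the Hausdorff dimension bound. Here I would not use the energy exponent alone. Instead I would rerun the variance argument of \cite{MR3736514}, since the claimed bound $1-\epsilon/(3+\log b/\log|D|)$ involves the growth rate $k_N\approx N^{\log b/\log|D|}$. The steps are as follows.
\begin{enumerate}
\item Bound the $L^2$ deviation of $R(s,N)$ from $2s$. Expanding the count in Fourier series and using the energy gives variance $\ll N^{-\epsilon}$, up to logarithmic factors.
\item For $N$ along a lacunary sequence, cover the set where the deviation exceeds a small threshold by intervals of length $\approx 1/(N k_N)$. The functions involved are trigonometric polynomials of degree $\ll N k_N\approx N^{1+\log b/\log|D|}$, so they are essentially constant at that scale.
\item Chebyshev's inequality then bounds the number of intervals needed. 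Summing over the sequence gives a Hausdorff $t$-content bound that is finite exactly when
\[ t>1-\frac{\epsilon}{3+\frac{\log b}{\log|D|}}. \]
The "3" comes from the degree and the passage from the lacunary sequence to all $N$.
\end{enumerate}
Reconciling the precise exponent with the bookkeeping of \cite{MR3736514} is the main obstacle. If it does not match exactly, I would accept whatever exponent the argument produces. The qualitative almost-everywhere statement follows regardless from the energy computation above.
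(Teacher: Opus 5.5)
Your energy computation is the paper's argument, and your version is cleaner. With no carries, additive quadruples in $\mathcal C\cap[0,b^n)$ are exactly digitwise quadruples, so $E(\mathcal C\cap[0,b^n))=E(D)^n\le |D|^{(3-\epsilon)n}$. Your identity $E(D)^n$ is the right one; the paper's display $E(C_{b^s})=|D+D|^s$ is a slip. Your monotonicity step for general $N$ also supplies a detail the paper leaves implicit.

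One correction, which the paper also glosses over: $D\subset\{0,\dots,\lfloor b/2\rfloor\}$ does not give $d_1+d_2\le b-1$ when $b$ is even and $b/2\in D$. In that case $b/2+b/2=b$ carries, and $x+y=z+w$ no longer forces digitwise equality. The paper's proof quietly uses that every digit is less than $b/2$. You need that assumption too, or a separate treatment of carries.

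The genuine gap is the Hausdorff dimension bound. You propose to re-run the variance argument of \cite{MR3736514} and explicitly leave the resulting exponent open. As written, therefore, your proposal proves only the almost-everywhere statement.

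The missing observation is that nothing needs re-running. The main result of \cite{MR3736514} already bounds the dimension of the exceptional set explicitly, in terms of the power saving in the energy and the polynomial growth of the sequence; this is how they obtain $\frac{d+2}{d+3}$ for $\{\alpha n^d\}$. The paper simply feeds in the saving $\epsilon$ and the growth $k_N\ll N^{\log b/\log|D|}$, both of which you computed yourself, and reads off $1-\epsilon/(3+\log b/\log|D|)$.

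Your sketch is also not yet an argument, for two reasons:
\begin{enumerate}
\item Along a lacunary sequence of $N$ you cannot pass to all $N$ in a pair-correlation problem; you need $N_{m+1}/N_m\to 1$.
\item Taken at face value, your steps give a $t$-content of order $N^{(1+\gamma)(1-t)-\epsilon}$ with $\gamma=\log b/\log|D|$. That is a denominator $1+\gamma$, so the $3$ in the threshold you state is asserted rather than derived.
\end{enumerate}
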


In particular, $D$ above is allowed to consist of a single non-zero element. That means that the Cantor set $\mathcal{C}$ is then lacunary. And this case is very well studied \cites{MR1924774,MR3314206,MR4164447} by much more sophisticated techniques.

The quantity in \eqref{eED} is the \emph{additive energy}  of $D$.  The trivial bound \eqref{eED} is $E(D)\leq \lvert D \rvert ^{3}$. So, the hypothesis is a better than trivial bound.  
We have the optimal bound $E(D) \ll D^2$ if $D$ is \emph{Sidon}, that is there are only trivial solutions to $a+b=c+d$ for $a,b,c,d\in S$. 
Erd\"os and Turan \cite{MR6197} have shown that $S$ can be taken to be as large as $\gg \lvert D\rvert^{1/2}$.  
Also see \cite{MR4336213}.

The proof is an easy corollary to the main result of Aistlerner, Larcher and Lewko \cite{MR3736514}. 
Certainly, issues related to pair correlation would be worthy of investigation with sharper techniques.

\begin{proof}
Fix the Cantor set $\mathcal C$ as above. To apply the main result of \cite{MR3736514}, we need to estimate the additive energy of $C_{b^s}$ for integers $s$.  
The condition on the allowed digits, namely that each digit is less than $b/2$, implies that adding digits does not lead to a carry digit. That is, $C_{b^s}+C_{b^s} = K _{b^s}$, where 
$K = C(b, D+D)$.  Thus, 
\begin{align}
    E(C_{b^s}) &= \lvert D+D\rvert ^{s} 
    \\ 
    & \ll  \lvert D\rvert ^{(3-\epsilon)s} = 
    \lvert C_{b^s} \rvert ^{3 - \epsilon}. 
\end{align}
Observe that  $k_n$, the $n$th element of $C$ 
is at most $k_n \ll n ^d$, with $d= \frac{\log b}{\log \lvert D\rvert}$. 
Our conclusion is a corollary to the main result of \cite{MR3736514}.

\end{proof}

\bibliographystyle{abbrv} 
\bibliography{bibliography.bib}

% section quantitative_bound (end)

\end{document}